\tikzset{>=stealth}
\tikzstyle{node} = [circle, minimum size = 1.1mm, inner sep = 0mm, draw={black}, fill]
\tikzstyle{hyperedge} = [rectangle, minimum width = 5mm, minimum height = 5mm, draw, inner sep = 0mm]
\newcommand{\eqdef}{:=}
\newcommand{\dom}{\mathrm{dom}}
\newcommand{\ran}{\mathrm{ran}}
\newcommand{\Nat}{\mathbb{N}}
\newcommand{\Gram}{\mathcal{G}}
\newcommand{\lab}{\mathit{lab}}
\newcommand{\att}{\mathit{att}}
\newcommand{\ext}{\mathit{ext}}
\newcommand{\type}{\mathrm{type}}
\newcommand{\HG}{\mathcal{H}}
\newcommand{\SG}{\mathrm{sg}}
\newcommand{\lt}{\mathbf{s}}
\newcommand{\rt}{\mathbf{t}}
\newcommand{\str}{{\mathit{str}}}
\newcommand{\Var}{\mathrm{Var}}
\newcommand{\FVar}{\mathrm{FVar}}
\newcommand{\Fm}{\mathrm{Fm}}
\newcommand{\mconj}{\otimes}
\newcommand{\mdisj}{\parr}
\newcommand{\bigmdisj}{\bigparr}
\newcommand{\limpl}{\multimap}
\newcommand{\aconj}{\wedge}
\newcommand{\adisj}{\vee}
\newcommand{\bang}{{!}}
\newcommand{\yields}{\vdash}
\newcommand{\Logic}{\mathcal{L}}
\newcommand{\LLFO}{\mathrm{LL}1}
\newcommand{\ILLFO}{\mathrm{ILL}1}
\newcommand{\MILLFO}{\mathrm{MILL}1}
\newcommand{\fm}{\mathit{fm}}
\newcommand{\diag}{\mathcal{D}}
\newcommand{\pc}{\mathbin{/\mkern-6mu/}}
\newcommand{\sub}{\mathit{sub}}
\title{First-Order Intuitionistic Linear Logic and Hypergraph Languages} 
\author{Tikhon Pshenitsyn}{Steklov Mathematical Institute of RAS, 8 Gubkina St., Moscow 119991, Russian Federation \and \url{https://www.researchgate.net/profile/Tikhon-Pshenitsyn} }{tpshenitsyn@mi-ras.ru}{https://orcid.org/0000-0003-4779-3143}{}
\authorrunning{T. Pshenitsyn} 
\keywords{linear logic, categorial grammar, MILL1 grammar, first-order logic, hypergraph language, graph transformation, language semantics, HR-algebra} 
\begin{document}

\maketitle

\begin{abstract}
	The Lambek calculus is a substructural logic known to be closely related to the formal language theory: on the one hand, it is used for generating formal languages by means of categorial grammars and, on the other hand, it has formal language semantics, with respect to which it is sound and complete. This paper studies a similar relation between first-order intuitionistic linear logic ILL1 along with its multiplicative fragment MILL1 on the one hand and the hypergraph grammar theory on the other. 
	In the first part, we introduce a novel concept of hypergraph first-order logic categorial grammar, which is a generalisation of string MILL1 grammars studied e.g.~in Richard Moot's 2014 works. We prove that hypergraph ILL1 grammars generate all recursively enumerable hypergraph languages and that hypergraph MILL1 grammars are as powerful as linear-time hypergraph transformation systems. In addition, we show that the class of languages generated by string MILL1 grammars is closed under intersection and that it includes a non-semilinear language as well as an NP-complete one. This shows how much more powerful string MILL1 grammars are as compared to Lambek categorial grammars.
	\\
	In the second part, we develop hypergraph language models for MILL1. In such models, formulae of the logic are interpreted as hypergraph languages and multiplicative conjunction is interpreted using parallel composition, which is one of the operations of HR-algebras introduced by Courcelle. We prove completeness of the universal-implicative fragment of MILL1 with respect to these models and thus present a new kind of semantics for a fragment of first-order linear logic. 
\end{abstract}

\section{Introduction}\label{section:introduction}

There is a strong connection between substructural logics, especially non-commutative ones, and the theory of formal languages and grammars \cite{Buszkowski03, MootR12}. This connection is two-way. On the one hand, a logic can be used as a derivational mechanism for generating formal languages, which is the essence of categorial grammars. One prominent example is Lambek categorial grammars based on the Lambek calculus $\mathrm{L}$ \cite{Lambek58}; formulae of the latter are built using multiplicative conjunction `$\cdot$' and two directed implications `$\backslash$', `$/$'. In a Lambek categorial grammar, one assigns a finite number of formulae of $\mathrm{L}$ to each symbol of an alphabet and chooses a distinguished formula $S$; then, the grammar accepts a string $a_1\ldots a_n$ if the sequent $A_1,\ldots,A_n \vdash S$ is derivable in $\mathrm{L}$ where, for $i=1,\ldots,n$, $A_i$ is one of the formulae assigned to $a_i$. A famous result by Pentus \cite{Pentus93} says that Lambek categorial grammars generate exactly context-free languages (without the empty word, to be precise).

On the other hand, algebras of formal languages can serve as models for substructural logics. For example, one can define language semantics for the Lambek calculus as follows: a language model is a function $u$ mapping formulas of $\mathrm{L}$ to formal languages such that $u(A \cdot B) = \{vw \mid v \in u(A), w \in u(B)\}$, $u(B \backslash A) = \{w \mid \forall v \in u(B)\; vw \in u(A)\}$, and $u(A/B) = \{v \mid \forall w \in u(B)\; vw \in u(A)\}$; a sequent $A \vdash B$ is interpreted as inclusion $u(A) \subseteq u(B)$. Another famous result by Pentus \cite{Pentus95} is that $\mathrm{L}$ is sound and complete w.r.t. language semantics; strong completeness for the fragment of $\mathrm{L}$ without `$\cdot$' had been proved earlier by Buszkowski in \cite{Buszkowski82} using canonical models.

Numerous variants and extensions of the Lambek calculus have been studied, including its nonassociative version \cite{MootR12}, commutative version \cite{vanBenthem95} (i.e. multiplicative intuitionistic linear logic), the multimodal Lambek calculus \cite{Moortgat96}, the displacement calculus \cite{MorrillVF10} etc. These logics have many common properties, which motivates searching for a unifying logic. One such ``umbrella'' logic is the first-order multiplicative intuitionistic linear logic $\MILLFO$ \cite{Moot14, MootP01}, which is the multiplicative fragment of first-order intuitionistic linear logic $\ILLFO$. The Lambek calculus can be embedded in $\MILLFO$ \cite{MootP01}: for example, the $\mathrm{L}$ sequent $p \cdot p \backslash q \vdash q$ is translated into the $\MILLFO$ sequent $p(x_0,x_1) \mconj \forall y (p(y,x_1) \limpl q(y,x_2)) \vdash q(x_0,x_2)$. In such a translation, variables ``fix'' the order of formulae. Although $\MILLFO$ is a first-order generalisation of $\mathrm{L}$, derivability problems in these logics have the same complexity, namely, they are NP-complete.
One can define $\MILLFO$ categorial grammars in the same manner as Lambek categorial grammars \cite{Moot14,Slavnov23}. The former generalise the latter, hence generate all context-free languages (see the definition of the latter in \cite{Kallmeyer10}). Moot proved in \cite{Moot14} that $\MILLFO$ grammars generate all multiple context-free languages, hence some non-context-free ones, e.g. $\{ww \mid w \in \Sigma^\ast\}$.

It turns out that the interplay between propositional substructural logics and formal grammars can be elevated fruitfully to that between first-order substructural logics and hypergraph grammars. This is the subject of this article. Hypergraph grammar approaches generate sets of hypergraphs; usually they are designed as generalisations of grammar formalisms for strings. For example, hyperedge replacement grammar \cite{DrewesKH97} is a formalism that extends context-free grammar. A rule of a hyperedge replacement grammar allows one to replace a hyperedge in a hypergraph by another hypergraph; see an example below.
\begin{figure}[!h]
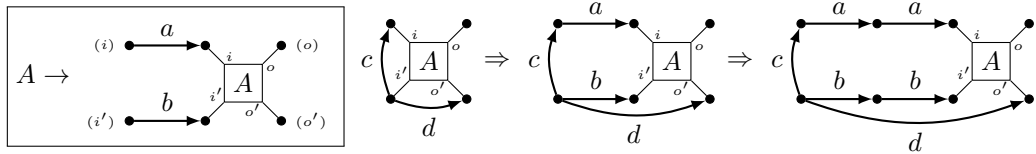

	$
	\boxed{
		A \to 
		\vcenter{\hbox{{\tikz[baseline=.1ex]{
						\def\HOR{1}
						\def\VER{1}
						\def\BEND{30}
						\node[hyperedge] (E1) at ($({\VER*1.5},\HOR*0.5)$) {$A$};
						\foreach \i in {0,...,2}
						{
							\foreach \j in {0,1}{
								\pgfmathtruncatemacro\NUM{3*\j+\i}
								\node[node, label=\ifnumequal{\i}{0}{left}{right}:{\tiny \ifnumequal{\NUM}{3}{$(i)$}{\ifnumequal{\NUM}{5}{$(o)$}{\ifnumequal{\NUM}{0}{$(i^\prime)$}{\ifnumequal{\NUM}{2}{$(o^\prime)$}{}}}}}] (V\i\j) at ($(\VER*\i,\HOR*\j)$) {};
							}
						}
						\draw[-] (E1) -- node[right] {\tiny $i$} (V11);
						\draw[-] (E1) -- node[below] {\tiny $o$} (V21);
						\draw[-] (E1) -- node[above] {\tiny $i^\prime$} (V10);
						\draw[-] (E1) -- node[left] {\tiny $o^\prime$} (V20);
						\draw[thick,-latex] (V01) -- node[above] {$a$} (V11);
						\draw[thick,-latex] (V00) -- node[above] {$b$} (V10);
	}}}}}
	\;
	\vcenter{\hbox{{\tikz[baseline=.1ex]{
					\def\HOR{1}
					\def\VER{1}
					\def\BEND{30}
					\node[hyperedge] (E1) at ($({\VER*0.5},\HOR*0.5)$) {$A$};
					\foreach \i in {0,1}
					{
						\foreach \j in {0,1}{
							\node[node] (V\i\j) at ($(\VER*\i,\HOR*\j)$) {};
						}
					}
					\draw[-] (E1) -- node[right] {\tiny $i$} (V01);
					\draw[-] (E1) -- node[below] {\tiny $o$} (V11);
					\draw[-] (E1) -- node[above] {\tiny $i^\prime$} (V00);
					\draw[-] (E1) -- node[left] {\tiny $o^\prime$} (V10);
					\draw[thick,-latex] (V00) to[bend left=20] node[left] {$c$} (V01);
					\draw[thick,-latex] (V00) to[bend right=20] node[below] {$d$} (V10);
					\def\SHIFT{2.2}
					\node[hyperedge] (2E1) at ($({\VER*1.5+\SHIFT},\HOR*0.5)$) {$A$};
					\foreach \i in {0,...,2}
					{
						\foreach \j in {0,1}{
							\node[node] (2V\i\j) at ($({\VER*\i+\SHIFT},\HOR*\j)$) {};
						}
					}
					\draw[-] (2E1) -- node[right] {\tiny $i$} (2V11);
					\draw[-] (2E1) -- node[below] {\tiny $o$} (2V21);
					\draw[-] (2E1) -- node[above] {\tiny $i^\prime$} (2V10);
					\draw[-] (2E1) -- node[left] {\tiny $o^\prime$} (2V20);
					\draw[thick,-latex] (2V01) -- node[above] {$a$} (2V11);
					\draw[thick,-latex] (2V00) -- node[above] {$b$} (2V10);
					\draw[thick,-latex] (2V00) to[bend left=20] node[left] {$c$} (2V01);
					\draw[thick,-latex] (2V00) to[bend right=20] node[below] {$d$} (2V20);
					\def\SSHIFT{5.4}
					\node[hyperedge] (3E1) at ($({\VER*2.5+\SSHIFT},\HOR*0.5)$) {$A$};
					\foreach \i in {0,...,3}
					{
						\foreach \j in {0,1}{
							\node[node] (3V\i\j) at ($({\VER*\i+\SSHIFT},\HOR*\j)$) {};
						}
					}
					\draw[-] (3E1) -- node[right] {\tiny $i$} (3V21);
					\draw[-] (3E1) -- node[below] {\tiny $o$} (3V31);
					\draw[-] (3E1) -- node[above] {\tiny $i^\prime$} (3V20);
					\draw[-] (3E1) -- node[left] {\tiny $o^\prime$} (3V30);
					\draw[thick,-latex] (3V01) -- node[above] {$a$} (3V11);
					\draw[thick,-latex] (3V00) -- node[above] {$b$} (3V10);
					\draw[thick,-latex] (3V11) -- node[above] {$a$} (3V21);
					\draw[thick,-latex] (3V10) -- node[above] {$b$} (3V20);
					\draw[thick,-latex] (3V00) to[bend left=20] node[left] {$c$} (3V01);
					\draw[thick,-latex] (3V00) to[bend right=20] node[below] {$d$} (3V30);
					\node at ($({(\SHIFT+\VER)/2-0.2},{\HOR*0.5})$) {$\Rightarrow$};
					\node at ($({(\SSHIFT+\SHIFT+2*\VER)/2-0.2},{\HOR*0.5})$) {$\Rightarrow$};
	}}}}
	$
	
	\caption{A hyperedge replacement rule (in a box) and an example of it being applied twice.}
	\label{figure:hrg}
\end{figure}

Note that, in this approach, hyperedges but not nodes are labeled. Some of the nodes, called external, are distinguished in a hypergraph; e.g., in the above example, these are the nodes marked by $(i),(o),(i^\prime),(o^\prime)$. External nodes are needed to specify how hyperedge replacement is done. A more general approach, which corresponds to type-0 grammars in the Chomsky hierarchy, is hypergraph transformation systems (the term is taken from \cite{Konig18}), which allow one to replace a subhypergraph in a hypergraph by another hypergraph.

Naturally, a hypergraph can be represented by a linear logic formula. Namely, one can interpret hyperedges as predicates, nodes as variables, and external nodes as free variables. For example, the hypergraph in the box from Figure \ref{figure:hrg} can be converted into the formula $\exists x.\exists y. a(i,x)\mconj b(i^\prime,y)\mconj A(x,o,y,o^\prime)$. This idea underlies the concept of \emph{hypergraph first-order categorial grammars}, which we introduce in Section \ref{section:hypergraph-categorial-grammars}. Roughly speaking, given a first-order logic $\Logic$, say, $\MILLFO$, a hypergraph $\Logic$ categorial grammar takes a hypergraph, assigns a formula of $\Logic$ to each its hyperedge and node, converts the resulting hypergraph into a $\Logic$ sequent and check whether it is derivable in $\Logic$. Using first-order linear logic for generating hypergraph languages is a novel idea which has not yet been explored in the literature. In Section \ref{section:hypergraph-categorial-grammars}, we study expressive power of grammars defined thusly and prove the following.
\begin{enumerate}
	\item \textit{Hypergraph $\ILLFO$ grammars are equivalent to hypergraph transformation systems and thus they generate all recursively enumerable hypergraph languages (Theorem \ref{theorem:ILL1G=RE}).} This result relates hypergraph $\ILLFO$ grammars to the well studied approach in the field of hypergraph grammars based on the double pushout graph transformation procedure.
	\item \emph{Hypergraph $\MILLFO$ grammars are at least as powerful as linear-time hypergraph transformation systems (Theorem \ref{theorem:MILL1G>LTHTS}).} The latter are hypergraph transformation systems where the length of a derivation is bounded by a linear function w.r.t. the size of the resulting hypergraph. The linear-time bound has been studied for many grammar formalisms \cite{Book71,Gladkii64,Pshenitsyn23,Tadaki10}, but, to our best knowledge, it is the first time it is used for graph grammars.
\end{enumerate}
The proofs partially use the techniques from \cite{Pshenitsyn23} where languages generated by grammars over the commutative Lambek calculus are studied. As compared to \cite{Pshenitsyn23}, the proofs in this paper are more technically involved because of complications arising when working with quantifiers and variables in the first-order setting.

In Section \ref{section:MILL1-grammars}, using the methods developed for hypergraph $\MILLFO$ grammars, we discover the following properties of the class of languages generated by string $\MILLFO$ grammars.
\begin{enumerate}
	\item \emph{Languages generated by string $\MILLFO$ grammars are closed under intersection (Theorem \ref{theorem:hypergraph-MILL1-grammars-intersection}). Consequently, non-semilinear languages, e.g. the language $\{a^{2n^2} \mid n \in \Nat\}$, can be generated by string $\MILLFO$ grammars.}
	\item \emph{String $\MILLFO$ grammars generate an NP-complete language (Theorem \ref{theorem:MILL1G-NPC}).} Note that the Lambek calculus is NP-complete \cite{Pentus06} but Lambek categorial grammars generate only context-free languages, which are in P. The logic $\MILLFO$ is NP-complete as well but string $\MILLFO$ grammars are able to generate non-polynomial languages (assuming P$\ne$NP), hence they are much more powerful.
\end{enumerate}
The question whether string $\MILLFO$ grammars generate only multiple context-free grammars was left open by Richard Moot in \cite{Moot14}, and I considered Theorem \ref{theorem:MILL1G-NPC} to be the first one answering it. Recently, however, Sergei Slavnov pointed out to an alternative answer to this question, using previously known techniques. Namely, in \cite{Moot14b}, it is shown that hybrid type-logical grammars can be translated into $\MILLFO$; hybrid type-logical grammars generalise abstract categorial grammars, and it is proved in \cite{Salvati10} that the latter generate an NP-complete language.
Our proof relies on a different technique, namely, on reducing linear-time hypergraph transformation systems, which are a rule-based approach unlike hybrid type-logical grammars. In general, finding a natural rule-based formalism equivalent to a $\MILLFO$ grammars is an interesting question to study, and Theorem \ref{theorem:MILL1G-NPC} is a step towards the answer\footnote{We believe that linear-time hypergraph transformation systems are essentially equivalent to $\MILLFO$ grammars in terms of generative power because, as we conjecture, it is possible to simulate $\MILLFO$ axioms and rules by hypergraph transformation rules and hence to convert hypergraph $\MILLFO$ grammars into linear-time transformation systems. However, some subtleties arise related to the definition of hypergraph transformation rules when one tries to do so; we discuss them after the proof of Theorem \ref{theorem:MILL1G>LTHTS}.}.


The second part of the paper (Section \ref{section:semantics}), not related to the first one, is devoted to developing hypergraph language semantics for $\MILLFO$, thus establishing the other way round connection between first-order linear logic and hypergraph languages. Linear logic is considered as a logic for reasoning about resources \cite{Girard93}, and language models for the Lambek calculus are one of formalisations of this statement with resources being words; this agrees with linguistic applications of $\mathrm{L}$. In this paper, we shall show that hypergraphs can be treated as ``first-order resources.'' In a hypergraph language model (Definition \ref{definition:model}), $\MILLFO$ formulas are interpreted by sets of hypergraphs, and the tensor operation $A \mconj B$ is interpreted using the parallel composition operation. The latter one is ``gluing'' of hypergraphs; it is studied well in the hypergraph grammar theory, namely, it is one of the operations in the language of HR-algebras \cite{CourcelleE12}. Hypergraph language models is a particular case of intuitionistic phase semantics (see the definition in \cite{KanovichOT06}) with the trivial closure operator $\mathrm{Cl}(X)=X$.

Our main result concerning hypergraph language models is soundness of $\MILLFO$ and completeness of its $\{\limpl,\forall\}$-fragment w.r.t. them (Theorem \ref{theorem:completeness}). The proof is inspired by Buszkowski's one \cite{Buszkowski82} but it is more technically involved, again because of the first-order setting. This result's importance amounts to the fact that hypergraph language models is one of few, to our best knowledge, examples of a specific semantics for a fragment of first-order intuitionistic linear logic, which, moreover, is grounded in the hypergraph language theory.


\section{Preliminaries}\label{section:preliminaries}

In Section \ref{subsection:hypergraphs}, we introduce notions from the field of graph grammars, and, in Section \ref{subsection:ILL1}, we introduce first-order intuitionistic linear logic. 

\subsection{Hypergraphs \& Hypergraph Transformation Systems}\label{subsection:hypergraphs}

There are many paradigms in the field of graph grammars, including node replacement grammars, hyperedge replacement grammars, algebraic approaches (double pushout, single pushout) with a more categori\textbf{c}al flavour, definability in monadic second-order logic etc. We shall work with the definition of a hypergraph from the field of hyperedge replacement grammars \cite{DrewesKH97,Engelfriet97,Habel92} because it fits first-order linear logic better than other ones. In hypergraphs we shall deal with, only hyperedges are labeled while nodes play an auxiliary role. Some of the nodes are marked as external; informally, they play the role of gluing points. Throughout the paper, we shall explore a natural correspondence between hypergraphs defined thusly and first-order linear logic formulae. In contrast, there would be no such correspondence if we sticked to the definition of a hypergraph where nodes are labeled and edges are not.

We fix a countable set $\Sigma$; its elements are called \emph{selectors}. In the grammar-logic correspondence we shall develop, selectors will guide variable substitution.
\begin{definition}
	A $\Sigma$-typed alphabet is a set $C$ along with a function $\type:C \to \mathcal{P}(\Sigma)$ such that $\type(c)$ is finite for $c \in C$.
\end{definition}

\begin{definition}
	Let $C$ be a finite $\Sigma$-typed alphabet of hyperedge labels. A \emph{hypergraph} over $C$ is a tuple $H = \langle V_H,E_H, \lab_H, \att_H, \ext_H\rangle$ where  $V_H$ is a finite set of nodes; $E_H$ is a finite set of hyperedges; for each $e \in E_H$, $\lab_H:E_H \to C$ is the labeling function and $\att_H(e):\type(\lab_H(e)) \to V_H$ is the attachment function; $\ext_H: \type(H) \to V_H$ is a function with the domain $\type(H) \subseteq \Sigma$. Elements of $\ran(\ext_H)$ are called \emph{external nodes}. The set of hypergraphs over $C$ is denoted by $\HG(C)$. 
	Let $\type_H(e) \eqdef \dom(\att_H(e))$ for each $e \in E_H$.
\end{definition}

In drawings of hypergraphs, nodes are depicted as black circles and hyperedges are depicted as labeled rectangles. When depicting a hypergraph $H$, we draw a line with a label $\sigma$ from $e$ to $v$ if $\att_H(e)(\sigma)=v$. External nodes are represented by symbols in round brackets: if $ext_H(\sigma)=v$, then we mark $v$ as $(\sigma)$. 

There is a standard issue with distinguishing between concrete and abstract hypergraphs, i.e. between hypergraphs and their isomorphism classes. When one considers a hypergraph language $L$, it is reasonable to assume that it consists of abstract hypergraphs (or, equivalently, that it is closed under isomorphism); however, when we write $H \in L$, we assume that $H$ is a concrete hypergraph. Following tradition, we often do not distinguish between abstract and concrete hypergraphs to avoid excessive bureaucracy.

Let us fix two selectors, $\lt$ and $\rt$. If $\type_H(e) = \{\lt,\rt\}$, then the hyperedge $e$ is called an \emph{edge} and it is depicted by an arrow going from $\att_H(e)(\lt)$ to $\att_H(e)(\rt)$.
\begin{definition}
	\emph{A string graph $\SG(w)$ induced by a string $w=a_1\dots a_n$} is defined as follows: $V_{\SG(w)} = \{v_0,\ldots,v_n\}$, $E_{\SG(w)} = \{e_1,\ldots,e_n\}$; $\type(e_i) = \type(\SG(w)) = \{\lt,\rt\}$, $\att_{\SG(w)}(e_i)(\lt)=v_{i-1}$, $\att_{\SG(w)}(e_i)(\rt)=v_{i}$, $\lab_{\SG(w)}(e_i)=a_i$ (for $i = 1, \ldots, n$); $\ext_{\SG(w)}(\lt)=v_0$, $\ext_{\SG(w)}(\rt)=v_n$. For example, $\SG(ab) = \vcenter{\hbox{{\tikz[baseline=.1ex]{
					\foreach \i in {1,...,3}
					{
						\node[node, label=\ifnumequal{\i}{1}{left}{right}:{\tiny \ifnumequal{\i}{1}{$(\lt)$}{\ifnumequal{\i}{3}{$(\rt)$}{}}}] (V\i) at ($(0.8*\i-0.8,0)$) {};
					}
					\draw[-latex, thick] (V1) -- node[above] {$a$} (V2);
					\draw[-latex, thick] (V2) -- node[above] {$b$} (V3);
	}}}}$.
\end{definition}
\begin{definition}
	Given $a \in C$, $a^\bullet$ is a hypergraph such that $V_{a^\bullet} = \type(a)$; $E_{a^\bullet} = \{e\}$ with $\type_{a^\bullet}(e)=\type(a)$; $\att_{a^\bullet}(\sigma)=\ext_{a^\bullet}(\sigma) = \sigma$ for $\sigma \in \type(a)$. For example, if $\type(a)=\{x,y,z\}$, then $a^\bullet = \vcenter{\hbox{{\tikz[baseline=.1ex]{
					\node[node, label=left:{\tiny $(x)$}] (V1) at ($(0,0.8)$) {};
					\node[node, label=left:{\tiny $(y)$}] (V2) at ($(0,0.4)$) {};
					\node[node, label=left:{\tiny $(z)$}] (V3) at ($(0,0)$) {};
					\node[hyperedge] (E) at (1,0.4) {$a$};
					\draw[-] (V1) to[bend left=10] node[above] {\tiny $x$} (E);
					\draw[-] (V2) -- node[above] {\tiny $y$} (E);
					\draw[-] (V3) to[bend right=10] node[above] {\tiny $z$} (E);
	}}}}$.
\end{definition}
\begin{definition}\label{definition:disjoint-union}
	If $H_1,H_2$ are hypergraphs with $\type(H_1)\cap\type(H_2) = \emptyset$, then their \emph{disjoint union} is the hypergraph $H_1+H_2 \eqdef \langle V_{H_1} \sqcup V_{H_2}, E_{H_1} \sqcup E_{H_2}, \att,\lab,\ext \rangle$ where $\att=\att_{H_i}$, $\lab=\lab_{H_i}$ on $E_i$ for $i=1,2$, and $\ext$ is the union of functions $\ext_{H_1}$ and $\ext_{H_2}$. 
\end{definition}

Now, let us define the hyperedge replacement operation.
\begin{definition}
	Let $H$ be a hypergraph and let $R$ be a binary relation on $V_H$. Let $\equiv_R$ be the smallest equivalence relation on $V_H$ containing $R$. Then $H/R = H^\prime$ is the following hypergraph: $V_{H^\prime} = \{[v]_{\equiv_R} \mid v \in V_H\}$; $E_{H^\prime} = E_H$; $\lab_{H^\prime} = \lab_H$; $\att_{H^\prime}(e)(s) = [\att_{H}(e)(s)]_{\equiv_R}$; $\ext_{H^\prime}(s) = [\ext_{H}(s)]_{\equiv_R}$.
\end{definition}
\begin{definition}
	Let $H,K$ be two hypergraphs over $C$; let $e \in E_H$ be a hyperedge such that $\type(e) = \type(K)$. Then the \emph{replacement of $e$ by $K$ in $H$} (the result being denoted by $H[e/K]$) is defined as follows:
	\begin{enumerate}
		\item Remove $e$ from $H$ and add a disjoint copy of $K$. Formally, let $L$ be the hypergraph such that $V_L = V_H \sqcup V_K$, $E_L = (E_H \setminus \{e\}) \sqcup E_K$, $\lab_L$ is the restriction of $\lab_H \cup \lab_K$ to $E_L$, $\att_L$ is the restriction of $\att_H \cup \att_K$ to $E_L$, and $\ext_L = \ext_H$.
		\item Glue the nodes that are incident to $e$ in $H$ with the external nodes of $K$. Namely, let $H[e/K] \eqdef L/R$ where $R = \{(\att_H(e)(s),\ext_K(s)) \mid s \in \type(e)\}$.
	\end{enumerate}
\end{definition}

Using hyperedge replacement, we define \emph{hypergraph transformation system}, a formalism which shall be used to describe expressive power of hypergraph categorial grammars. It enables one to replace a subhypergraph in a hypergraph with another hypergraph. 
\begin{definition}\label{definition:ht-rule-system}
	A \emph{hypergraph transformation rule (ht-rule)} is of the form $r = (H \to H^\prime)$ where $H,H^\prime$ are hypergraphs such that $\type(H) = \type(H^\prime)$ and $\ext_H,\ext_{H^\prime}$ are injective.
	\\
	We say that $G$ is transformed into $G^\prime$ via $r$ and denote this by $G \Rightarrow_r G^\prime$ (also by $G \Rightarrow G^\prime$) if $G = K[e/H]$ and $G^\prime = K[e/H^\prime]$ for some $K$ and $e \in E_K$ such that $\att_K(e)$ is injective. 
	\\
	A \emph{hypergraph transformation system (ht-system)} is a tuple $\Gram = \langle N,T,P,S\rangle$ where $N,T$ are $\Sigma$-typed alphabets, $P$ is a finite set of hypergraph transformation rules and $S \in \mathcal{H}(N \cup T)$ is a start hypergraph such that $\ext_S$ is injective.
	The language $L(\Gram)$ generated by $\Gram$ consists of hypergraphs $H \in \HG(T)$ such that $S \Rightarrow^\ast H$.
\end{definition}
\begin{figure}[!h]
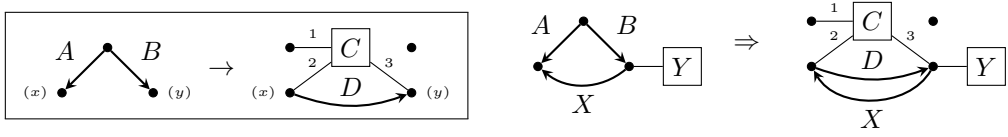

	$
		\boxed{
			\vcenter{\hbox{{\tikz[baseline=.1ex]{
							\node[node] (N) at (0,0.6) {};
							\node[node, label=left:{\tiny $(x)$}] (N1) at (-0.6,0) {};
							\node[node, label=right:{\tiny $(y)$}] (N2) at (0.6,0) {};
							\draw[->, thick] (N) -- node[above left] {$A$} (N1);
							\draw[->, thick] (N) -- node[above right] {$B$} (N2);
							\def\X{1.6}
							\def\Y{0.6}
							\def\SHIFT{2.4}
							\node at ($({(\SHIFT+0.6)/2},\Y/2)$) {$\to$};
							\node[node] (2N) at (\SHIFT,\Y) {};
							\node[node, label=left:{\tiny $(x)$}] at (\SHIFT,0) (2N1) {};
							\node[node, label=right:{\tiny $(y)$}] (2N2) at ($(\SHIFT+\X,0)$) {};
							\node[node] (2N3) at ($(\SHIFT+\X,\Y)$) {};
							\node[hyperedge] (2F) at ($(\SHIFT+\X/2,\Y)$)  {$C$};
							\draw[->, thick] (2N1) to[bend right=20] node[above] {$D$} (2N2);
							\draw[-] (2N) -- node[above] {\tiny 1} (2F);
							\draw[-] (2N1) -- node[above] {\tiny 2} (2F);
							\draw[-] (2N2) -- node[above] {\tiny 3} (2F);
		}}}}}
		\qquad
		\vcenter{\hbox{{\tikz[baseline=.1ex]{
						\def\X{1.6}
						\def\Y{0.6}
						\def\H{1.3}
						\def\SHIFT{3}
						\node[node] (N) at (0,0.6) {};
						\node[node] (N1) at (-0.6,0) {};
						\node[node] (N2) at (0.6,0) {};
						\node[hyperedge] (F2) at ($(\H,0)$)  {$Y$};
						\draw[->, thick] (N) -- node[above left] {$A$} (N1);
						\draw[->, thick] (N) -- node[above right] {$B$} (N2);
						\draw[->, thick] (N2) to[bend left=40] node[below] {$X$} (N1);
						\draw[-] (N2) -- (F2);
						\node at ($({(\SHIFT+\H)/2},\Y/2)$) {$\Rightarrow$};
						\node[node] (2N) at (\SHIFT,\Y) {};
						\node[node] at (\SHIFT,0) (2N1) {};
						\node[node] (2N2) at ($(\SHIFT+\X,0)$) {};
						\node[node] (2N3) at ($(\SHIFT+\X,\Y)$) {};
						\node[hyperedge] (2F) at ($(\SHIFT+\X/2,\Y)$)  {$C$};
						\node[hyperedge] (2F2) at ($(\SHIFT+\X+0.7,0)$)  {$Y$};
						\draw[->, thick] (2N1) to[bend right=20] node[above] {$D$} (2N2);
						\draw[<-, thick] (2N1) to[bend right=60] node[below] {$X$} (2N2);
						\draw[-] (2N2) -- (2F2);
						\draw[-] (2N) -- node[above] {\tiny 1} (2F);
						\draw[-] (2N1) -- node[above] {\tiny 2} (2F);
						\draw[-] (2N2) -- node[above] {\tiny 3} (2F);
		}}}}
	$
	
	\caption{An example of a hypergraph transformation rule (boxed) and of its application.}
\end{figure}

The definition of a hypergraph transformation rule, although given in a slightly unconventional way through hyperedge replacement, coincides with the standard notion of a graph transformation rule with injective morphisms of the double pushout approach formalism in the corresponding category of hypergraphs; compare it with \cite{Konig18}. Also, our definition is essentially the same as that from \cite{Uesu78} (with the only difference that the cited paper deals with graphs rather than with hypergraphs). In that paper, the following proposition is proved.
\begin{proposition}\label{proposition:hts=re}
	Hypergraph transformation systems generate all recursively enumerable hypergraph languages.
\end{proposition}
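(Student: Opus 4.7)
The plan is to reduce the general case to string rewriting. Let $L\subseteq\HG(T)$ be a recursively enumerable hypergraph language and fix a computable injective encoding $\langle\cdot\rangle$ of hypergraphs from $\HG(T)$ as strings over some finite alphabet $\Delta$ disjoint from $T$; pick a type-$0$ string grammar $G_0$ generating $\{\langle H\rangle \mid H\in L\}$. The ht-system $\Gram$ I would build runs in two successive phases: a generation phase that derives a string graph $\SG(\langle H\rangle)$ for some $H\in L$, followed by a decoding phase that turns this string graph into the hypergraph $H$ itself. The generation phase is direct, because string rewriting embeds into hyperedge replacement: each production $u\to v$ of $G_0$ becomes the ht-rule $(\SG(u)\to\SG(v))$, whose application is exactly substring rewriting on the string-graph part of the current hypergraph. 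Edges of string graphs have injective attachments, so the ht-rule side condition is automatically satisfied, and $\Gram$ can therefore derive $\SG(\langle H\rangle)$ for every $H\in L$.

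For the decoding phase I would pick the encoding so that $\langle H\rangle$ first lists node identifiers $n_1,\ldots,n_{|V_H|}$, then, for each hyperedge of $H$, lists its label together with the identifiers of its attached nodes in the order prescribed by $\type$, and finally lists external nodes with their selectors. Decoding is handled by dedicated ht-rules: one family of rules processes each node identifier, spawning a fresh target node and attaching to it a small \emph{identifier gadget} (a labelled hyperedge) carrying $n_i$; another family processes each encoded hyperedge by introducing a placeholder hyperedge of the correct label with all attachments initially going to a single dummy node, and then uses small cursor hyperedges that walk through the string graph and, upon matching an identifier gadget, reroute one selector of the placeholder to the identified node. External-node markers are handled in the same spirit. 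A final clean-up phase deletes the gadgets, the cursors, and the leftover string-graph symbols, leaving exactly $H$.

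The main obstacle is the decoding phase, because a single hyperedge of $H$ must be attached to several nodes whose identifier gadgets may sit arbitrarily far apart on the string-graph tape, while ht-rules are strictly local. The cursor-plus-gadget mechanism overcomes this: every node of the nascent target is uniquely tagged, and each attachment is installed by a bounded-size rule that matches a cursor together with the appropriate gadget and reroutes one selector at a time. The injectivity constraint on ht-rule matches, which complicates hypergraph manipulation elsewhere, is easy to preserve here because the gadgets are fixed small hypergraphs and the cursors are designed to be globally unique at every intermediate stage. Correctness follows by a two-step argument: first, the generation phase produces exactly the set of string graphs $\{\SG(\langle H\rangle)\mid H\in L\}$ by soundness and completeness of the simulation of $G_0$; second, the decoding phase deterministically reconstructs $H$ from $\SG(\langle H\rangle)$, and no dead-end intermediate configuration contributes to $L(\Gram)$ because such configurations still contain auxiliary labels outside $T$.
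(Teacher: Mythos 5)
Your proposal follows essentially the same route as the paper, which attributes the result to Uesu (1978) and sketches exactly this strategy: generate a string encoding of the target hypergraph by simulating a type-0 string grammar with rules $\SG(u)\to\SG(v)$, then convert the string representation into the hypergraph itself by further ht-rules. The only points worth tightening are the empty-word cases (for $u$ or $v$ empty, $\SG(\varepsilon)$ has a non-injective $\ext$, so such productions must be normalised away) and the decoding of hyperedges whose attachment map is non-injective, where the injectivity condition on matches forces finitely many rule variants per coincidence pattern of selectors.
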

This is a rather expected result related to string rewriting systems generating all recursively enumerable string languages. To prove Proposition \ref{proposition:hts=re}, it suffices to show how to convert a string representation of a hypergraph into the hypergraph itself by means of ht-rules.

The injectivity requirements in Definition \ref{definition:ht-rule-system} are quite standard, ht-systems defined thusly correspond to the class of $\mathrm{DPO}^{i/i}$ grammars investigated in \cite{HabelMP01} (``DPO'' stands for ``double-pushout approach''). In this paper, injectivity is crucial in the proof of Theorem \ref{theorem:MILL1G>LTHTS}.

\subsection{First-Order Intuitionistic Linear Logic}\label{subsection:ILL1}

We assume the reader's familiarity with the basic principles and issues of first-order logic. Let us fix a countable set of variables $\Var$ and a countable set of predicate symbols with arities. Atomic formulae are of the form $p(x_1,\ldots,x_n)$ where $p$ is a predicate symbol of arity $n$ and $x_1,\ldots,x_n$ are variables. Following \cite{Komori86,Moot14,MootP01}, we do not allow function symbols; note that complex terms would not fit in the \emph{variables are nodes, predicates are hyperedges} paradigm. We also do not allow constants because they can easily be simulated by variables.

Formulae of intuitionistic linear logic $\ILLFO$ are built from atomic formulae and propositional constants $0,1,\top$ using the multiplicative connectives $\mconj,\limpl$, the additive ones $\aconj,\adisj$, and the exponential one $\bang$ along with the quantifiers $\exists,\forall$. The multiplicative fragment of $\ILLFO$ denoted by $\MILLFO$ does not have constants and uses only $\mconj,\limpl$ and $\exists,\forall$. A sequent is a structure of the form $\Gamma \vdash B$ where $\Gamma$ is a multiset of formulae and $B$ is a formula. 

Note that we shall sometimes describe multisets using the notation $\{f(x) \mid \Phi(x) \}$. An element $a$ belongs to this multiset $n$ times if there are exactly $n$ elements $x_1,\ldots,x_n$ such that $f(x_i)=a$ and such that $x_i$ satisfies $\Phi$.

The only axiom is $A \vdash A$. The rules for $\MILLFO$ are presented below.
$$
\infer[(\mconj L)]{\Gamma,A \mconj B \yields C}{\Gamma,A, B \yields C}
\quad
\infer[(\mconj R)]{\Gamma,\Delta \yields A \mconj B}{\Gamma \yields A & \Delta \yields B}
\quad
\infer[(\multimap L)]{\Gamma, \Pi, B \multimap A \yields C}{\Pi \yields B & \Gamma, A \yields C}
\quad
\infer[(\multimap R)]{\Gamma \yields B \multimap A}{\Gamma, B \yields A}
$$
$$
\infer[(\exists L)]{\Gamma, \exists x A \yields B}{\Gamma, A[z/x] \yields B} 
\qquad
\infer[(\exists R)]{\Gamma \yields \exists x A}{\Gamma \yields A[y/x]}
\qquad
\infer[(\forall L)]{\Gamma, \forall x A \yields B}{\Gamma, A[y/x] \yields B} 
\qquad
\infer[(\forall R)]{\Gamma \yields \forall x A}{\Gamma \yields A[z/x]}
$$
Here $y$ is any variable while $z$ is a variable which is not free in $\Gamma,A,B$; $A[y/x]$ denotes replacing all free occurrences of $x$ in $A$ by $y$. More generally, if $h:\FVar(A) \to \Var$ is a function defining a correct substitution, then $A[h]$ denotes the result of substituting $h(x)$ for $x$ for $x \in \FVar(A)$ ($\FVar(A)$ is the set of free variables in $A$). Rules for additive and exponential connectives of $\ILLFO$ can be found e.g.~in \cite[Appendix E]{Shellinx91}.

The cut rule is admissible in $\ILLFO$:
$$
\infer[(\mathrm{cut})]{\Gamma,\Delta \yields C}{\Gamma \yields A & A, \Delta \yields C}
$$
Using it, one can prove that the rules $(\mconj L)$, $(\limpl R)$, $(\exists L)$, and $(\forall R)$ are invertible, i.e.~that, if a conclusion of any of these rules is provable in $\ILLFO$, then so is its premise.

\section{Hypergraph First-Order Categorial Grammars}\label{section:hypergraph-categorial-grammars}

The idea of extending the Lambek calculus and categorial grammars to hypergraphs was explored recently in the work \cite{Pshenitsyn22}, where the hypergraph Lambek calculus was introduced. This is a propositional logic whose formulae are built using two operators, $\times$ and $\div$ (similar to linear logic $\mconj$ and $\limpl$). Formulae of this calculus can be used as labels on hyperedges: e.g. if $H$ is a hypergraph labeled by formulas, then $\times(H)$ is a formula. Based on the hypergraph Lambek calculus, hypergraph Lambek grammars were defined and their properties were investigated. Although the definition of the hypergraph Lambek calculus is justified in \cite{Pshenitsyn22}, the syntax of this logic is somewhat cumbersome. We are going to show that one can use any first-order logic, such as $\MILLFO$, as the underlying logic for hypergraph categorial grammars. The definitions we propose below are much simpler than those from \cite{Pshenitsyn22}; besides, they enable one to rely on the well studied apparatus of linear logic.

Let us start with the definition of a string categorial grammar over a first-order logic. This notion appears in \cite{Moot14,Slavnov23} for $\MILLFO$ but we would like to start with a more general exposition. Let $\Logic$ be a first-order sequent calculus of interest and let $\Fm(\Logic)$ denote the set of its formulas. Let $\lt,\rt$ be two fixed variables (note that earlier we used them as selectors).

\begin{definition}\label{definition:string-L-grammar}
	A string $\Logic$ grammar is a tuple $\Gram = \langle T, S, \triangleright \rangle$ where $T$ is a finite alphabet, $S$ is a formula of $\Logic$ such that $\FVar(S) \subseteq \{\lt,\rt\}$, and $\triangleright \subseteq T \times \Fm(\Logic)$ is a finite binary relation such that $a \triangleright A$ implies $\FVar(A) \subseteq \{\lt,\rt\}$.
	The language $L(\Gram)$ generated by $\Gram$ is defined as follows: $a_1\ldots a_n \in L(\Gram)$ if and only if there are formulas $A_1,\ldots,A_n$ such that $a_i \triangleright A_i$ for $i=1,\ldots,n$ and such that the sequent $A_1[x_0/\lt,x_1/\rt], \ldots, A_n[x_{n-1}/\lt,x_n/\rt] \vdash S[x_0/\lt,x_n/\rt]$ is derivable in $\Logic$ where $x_0,\ldots,x_n$ are distinct variables.
\end{definition}

\begin{example}\label{example:string-L-grammar}
	Let $T=\{a\}$, let $S = q(\lt,\rt)$, and let $\triangleright$ consist of the pairs $a \triangleright p(\lt,\rt)$, $a \triangleright \forall x. p(x,\lt) \limpl q(x,\rt)$. This grammar accepts the string $aa$, because the sequent $$p(x_0,x_1),\forall x. p(x,x_1) \limpl q(x,x_2) \vdash q(x_0,x_2)$$ is derivable in $\MILLFO$.
\end{example}

We see that, in Definition \ref{definition:string-L-grammar}, the noncommutative structure of a string is simulated by variables. Informally, one could imagine a string graph with the nodes $x_0,\ldots,x_n$ such that, for $i=1,\ldots,n$, there is an edge labeled by $A_i$ connecting $x_{i-1}$ to $x_i$. Based on this observation, let us introduce the central notion of hypergraph $\Logic$ grammars. From now on, we consider nodes, selectors and logical variables as objects of the same kind. Besides, if $T$ is a $\Sigma$-typed alphabet, then we treat $a \in T$ as a predicate symbol. 

\begin{definition}\label{definition:hypergraph-L-grammar}
	Let us fix a variable $x_\bullet$ and a symbol $\bullet$. A \emph{hypergraph $\Logic$ grammar} is a quadruple $\Gram = \langle T, S, X, \triangleright \rangle$ where $T$ is a $\Sigma$-typed alphabet; $\triangleright \subseteq (T \cup \{\bullet\}) \times \Fm({\Logic})$ is a finite binary relation such that $a \triangleright A$ implies $\FVar(A) \subseteq \type(a)$ and $\bullet \triangleright A$ implies $\FVar(A) \subseteq \{x_\bullet\}$; finally, $S$ is a formula of $\Logic$ such that $\FVar(S) \subseteq X \subseteq \Sigma$.
\end{definition}

\begin{definition}\label{definition:language-hypergraph-L-grammar}
	The language $L(\Gram)$ is defined as follows: $H \in L(\Gram)$ if and only if $\type(H)=X$ and there are functions $h_V:V_H \to \Fm({\Logic})$, $h_E:E_H \to \Fm({\Logic})$ such that 
	\begin{enumerate}
		\item $\bullet \triangleright h_V(v)$ for $v \in V_H$, $\lab_H(e) \triangleright h_E(e)$ for $e \in E_H$;
		\item the sequent $\{h_E(e)[\att_H(e)] \mid e \in E_H \},\{h_V(v)[v/x_{\bullet}] \mid v \in V_H \} \vdash S[\ext_H]$ is derivable in $\Logic$.
	\end{enumerate}
\end{definition}

\begin{example}\label{example:hypergraph-L-grammar}
	Let $\Gram$ be a hypergraph $\MILLFO$ grammar with $T \eqdef \{a,b\}$ ($\type(a)=\{\lt,\rt\}$, $\type(b)=\{1,2,3\}$), $X \eqdef \{\lt,\rt\}$, $S \eqdef p(\lt,\rt) \mconj r \mconj r \mconj r$, and with $\triangleright$ consisting of the pairs 
	\begin{itemize}
		\item $a \triangleright q(\lt,\rt)$; \qquad $a \triangleright \forall x. q(x,\lt) \limpl p(x,\rt)$;
		\item $b \triangleright q(2,3) \limpl p(1,1) \limpl p(2,3)$;
		\item $\bullet \triangleright r$; \qquad $\bullet \triangleright \forall y. p(x_\bullet,y)$.
	\end{itemize}
	Consider the hypergraph 
	$H=
	\vcenter{\hbox{{\tikz[baseline=.1ex]{
					\def\X{1.6}
					\def\Y{0.6}
					\def\SHIFT{0}
					\node[node] (2N) at (\SHIFT,\Y) {};
					\node[node, label=left:{\tiny $(\lt)$}] at (\SHIFT,0) (2N1) {};
					\node[node, label=right:{\tiny $(\rt)$}] (2N2) at ($(\SHIFT+\X,0)$) {};
					\node[node] (2N3) at ($(\SHIFT+\X,\Y)$) {};
					\node[hyperedge] (2F) at ($(\SHIFT+\X/2,\Y)$)  {$b$};
					\draw[->, thick] (2N1) to[bend right=20] node[above] {$a$} (2N2);
					\draw[-] (2N) -- node[above] {\tiny 1} (2F);
					\draw[-] (2N1) -- node[above] {\tiny 2} (2F);
					\draw[-] (2N2) -- node[above] {\tiny 3} (2F);
	}}}}
	$ with $V_H=\{v_1,v_2,v_3,v_4\}$ (nodes are enumerated left to right, top to bottom). It belongs to $L(\Gram)$, because the sequent 
	$$
	q(v_3,v_4), q(v_3,v_4) \limpl p(v_1,v_1) \limpl p(v_3,v_4), \forall y. p(v_1,y), r, r, r \vdash p(v_3,v_4) \mconj r \mconj r \mconj r
	$$
	is derivable in $\MILLFO$. In this sequent, the first formula corresponds to the $a$-labeled hyperedge, the second one corresponds to the $b$-labeled hyperedge, the third formula corresponds to $v_1$ and the remaining formulae correspond to $v_2,v_3,v_4$.
	
	The string graph 
	$\SG(aa) = \vcenter{\hbox{{\tikz[baseline=.1ex]{
					\foreach \i in {1,...,3}
					{
						\node[node, label=\ifnumequal{\i}{1}{left}{right}:{\tiny \ifnumequal{\i}{1}{$(\lt)$}{\ifnumequal{\i}{3}{$(\rt)$}{}}}] (V\i) at ($(0.8*\i-0.8,0)$) {};
					}
					\draw[-latex, thick] (V1) -- node[above] {$a$} (V2);
					\draw[-latex, thick] (V2) -- node[above] {$a$} (V3);
	}}}}$ also belongs to $L(\Gram)$, because the sequent
	$$
	q(v_0,v_1), \forall x. q(x,v_1) \limpl p(x,v_2), r, r, r \vdash p(v_0,v_2) \mconj r \mconj r \mconj r
	$$
	is derivable in $\MILLFO$ (the nodes of $\SG(aa)$ from left to right are $v_0,v_1,v_2$).
	
	Finally, the hypergraph 
	$\vcenter{\hbox{{\tikz[baseline=.1ex]{
					\foreach \i in {1,...,4}
					{
						\node[node, label=\ifnumequal{\i}{1}{left}{right}:{\tiny \ifnumequal{\i}{1}{$(\lt)$}{\ifnumequal{\i}{4}{$(\rt)$}{}}}] (V\i) at ($(0.4*\i-0.4,0)$) {};
					}}}}}
	$
	without hyperedges and with nodes $w_1,w_2,w_3,w_4$ is also accepted by $\Gram$, because the following sequent is derivable in $\MILLFO$:
	$$
	\forall y. p(w_1,y), r, r, r \vdash p(w_1,w_4) \mconj r \mconj r \mconj r.
	$$
\end{example}

Let us comment on Definitions \ref{definition:hypergraph-L-grammar} and \ref{definition:language-hypergraph-L-grammar}. The relation $\triangleright$ assigns formulae of $\Logic$ to hyperedge labels from $T$. Besides, it assigns formulae with the free variable $x_\bullet$ (or without free variables) to the distinguished ``node symbol'' $\bullet$. Since, in the theory of hyperedge replacement, it is traditional to consider hypergraphs where only hyperedges are labeled, one might ask why we assign formulas not only to hyperedges but to nodes as well. The answer is that we need to have some control over nodes. If we remove all the parts concerning nodes from Definitions \ref{definition:hypergraph-L-grammar} and \ref{definition:language-hypergraph-L-grammar}, then hypergraph $\Logic$ grammars would completely ignore isolated nodes; this is a minor yet annoying issue. Moreover, each hypergraph language generated by a hypergraph $\Logic$ grammar (for, say, $\Logic=\MILLFO$) would be closed under node identification.
\begin{example}
	Assume that $\SG(aa) \in L(\Gram)$ for a hypergraph $\MILLFO$ grammar $\Gram = \langle T, S, \{\lt,\rt\},\triangleright \rangle$ where nodes do not participate in the grammar formalism. This would mean that there are formulae $A_1,A_2$ with free variables in $\{\lt,\rt\}$ such that $a \triangleright A_1$, $a \triangleright A_2$ and such that the sequent $A_1[v_0/\lt,v_1/\rt], A_2[v_1/\lt,v_2/\rt] \vdash S[v_0/\lt,v_2/\rt]$ is derivable in $\MILLFO$. However, this necessarily implies that the sequent $A_1[v_0/\lt,v_0/\rt], A_2[v_0/\lt,v_1/\rt] \vdash S[v_0/\lt,v_1/\rt]$ is derivable in $\MILLFO$ too, hence the hypergraph $\vcenter{\hbox{{\tikz[baseline=.1ex]{
					\foreach \i in {1,2}
					{
						\node[node, label=\ifnumequal{\i}{1}{left}{right}:{\tiny \ifnumequal{\i}{1}{$(\lt)$}{\ifnumequal{\i}{2}{$(\rt)$}{}}}] (V\i) at ($(0.8*\i-0.8,0)$) {};
					}
					\draw[latex-, thick] (V1) to[out=60,in=120,looseness=30] node[above] {$a$} (V1);
					\draw[-latex, thick] (V1) -- node[above] {$a$} (V2);
	}}}}$ is also accepted by $\Gram$. Consequently, hypergraph $\MILLFO$ grammars without node-formula assignment are not able to generate a language consisting only of string graphs, which is quite undesirable.
\end{example}

Another remark concerning Definition \ref{definition:hypergraph-L-grammar} is why we need the set $X$. This set makes the language generated by a grammar consistent in terms of external nodes: if $H_1,H_2 \in L(\Gram)$, then $\type(H_1)=\type(H_2)=X$. Note that ht-systems and hyperedge replacement grammars \cite{DrewesKH97} are consistent in this sense.

Given a hypergraph $\Logic$ grammar $\Gram$, one can consider only string graphs generated by it and thus associate a string language with $\Gamma$.
\begin{definition}
	The string language $L^\str(\Gram)$ generated by a hypergraph $\Logic$ grammar $\Gram$ is the set $\{w \mid \SG(w) \in L(\Gram)\}$.
\end{definition}
One expects that, normally, string languages generated by hypergraph $\Logic$ grammars should be the same as languages generated by string $\Logic$ grammars. For example, for $\Logic=\MILLFO$, the following proposition holds.
\begin{proposition}\label{proposition:string-hypergraph-MILL1}
	If a language $L$ is generated by a string $\MILLFO$ grammar, then there is a hypergraph $\MILLFO$ grammar $\Gram$ such that $L^{\str}(\Gram)=L$. Conversely, if $\Gram$ is a hypergraph $\MILLFO$ grammar, then $L^{\str}(\Gram) \setminus \{\varepsilon\}$ is generated by some string $\MILLFO$ grammar.
\end{proposition}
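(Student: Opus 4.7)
The plan is to give direct constructions in both directions, trading formulas across the node/edge boundary with the help of a fresh unary auxiliary predicate, and to verify correctness using a freshness-based analysis of cut-free $\MILLFO$ proofs.

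For the forward direction, given a string $\MILLFO$ grammar $\Gram = \langle T, S, \triangleright \rangle$, I would pick a fresh unary predicate $r$ and define $\Gram' \eqdef \langle T, S \mconj r(\lt), \{\lt,\rt\}, \triangleright' \rangle$, setting $\type(a) \eqdef \{\lt,\rt\}$ for every $a \in T$, putting $a \triangleright' r(\rt) \limpl A$ into $\triangleright'$ for every $a \triangleright A$, and taking $\bullet \triangleright' r(x_\bullet)$ as the unique node-formula option. Intuitively, each node carries a token $r(v_i)$, each edge consumes the token at its right endpoint in exchange for $A_i$, and the leftover $r(v_0)$ at the leftmost node is tensored off to cancel the extra $r(\lt)$ in $S'$. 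One direction of the equivalence $\SG(w) \in L(\Gram') \Leftrightarrow w \in L(\Gram)$ is produced by combining the derivations $r(v_i), r(v_i) \limpl A_i \vdash A_i$ (via $(\limpl L)$) with a final $(\mconj R)$ that peels off the axiom $r(v_0) \vdash r(v_0)$; the converse will rely on the fact that, since $r$ occurs nowhere in $S$ or in any $A_i$, in a cut-free proof the atoms of $r$ can only be matched through precisely these rules, so the decomposition is forced.

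For the reverse direction, I would absorb node formulas into edge formulas and use a second fresh unary predicate $\mathrm{st}$ to single out the leftmost edge. Assume $X = \{\lt,\rt\}$ and every $a$ with $a \triangleright A$ satisfies $\type(a) = \{\lt,\rt\}$ (otherwise $L^{\str}(\Gram) \subseteq \{\varepsilon\}$ trivially). Define $\Gram' \eqdef \langle T, \mathrm{st}(\lt) \limpl S, \triangleright' \rangle$, where, for every $a \triangleright A$ and every $B_\ell, B_r$ with $\bullet \triangleright B_\ell, B_r$, the relation $\triangleright'$ contains both an ``inner'' entry $a \triangleright' A \mconj B_r(\rt)$ and a ``start'' entry $a \triangleright' \mathrm{st}(\lt) \limpl (A \mconj B_\ell(\lt) \mconj B_r(\rt))$. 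After inverting $(\limpl R)$ on the string-grammar sequent, exactly one $\mathrm{st}$-atom, namely $\mathrm{st}(x_0)$, appears on the left and none on the right. Freshness of $\mathrm{st}$ together with first-order matching of its argument variable then forces a single $(\limpl L)$ on a ``start'' entry at the leftmost edge, and after $(\mconj L)$ simplifications the sequent becomes exactly the hypergraph sequent for $\SG(w)$, giving $L(\Gram') = L^{\str}(\Gram) \setminus \{\varepsilon\}$.

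The main difficulty will be making the two freshness arguments rigorous. My plan is to invoke cut-elimination for $\MILLFO$ and then to do a small inductive analysis of cut-free proofs, showing that every occurrence of $r$ (respectively $\mathrm{st}$) is introduced by an axiom whose matching formula on the other side is uniquely pinned down by its argument variables; the absence of function symbols and constants in $\MILLFO$ is exactly what keeps this bookkeeping manageable. Finally, the exclusion of $\varepsilon$ in the reverse direction is intrinsic to the $\mathrm{st}$-trick: when $w = \varepsilon$ there is no edge to fire a ``start'' entry, so the empty-string sequent $\vdash \mathrm{st}(x_0) \limpl S[x_0,x_0]$ is never derivable in $\MILLFO$, even when $\SG(\varepsilon) \in L(\Gram)$.
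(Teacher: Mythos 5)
Your constructions are sound and are, at heart, the same trick the paper uses: a fresh auxiliary predicate shuttles node formulas across the node/edge boundary, and a second marker atom singles out the leftmost position (and blocks $\varepsilon$). The differences are in the wiring. For the forward direction the paper assigns the self-cancelling formula $q \limpl q$ (with $q$ a fresh nullary predicate) to $\bullet$, leaves the edge formulas untouched, and sets $S' = S \mconj (q \limpl q)$; correctness then follows in one stroke from the splitting lemma (Lemma~\ref{lemma:splitting}), since the $q$-part and the $S$-part live over disjoint predicate vocabularies. Your token scheme ($r(x_\bullet)$ at nodes, $r(\rt) \limpl A$ at edges) works too, but it entangles the fresh predicate with the original formulas inside a single implication, which is exactly what forces you into the proof-theoretic analysis below. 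For the reverse direction the two proofs are essentially dual: the paper tensors $\mu(\lt)$ into the border formula and into $S'$, you put $\mathrm{st}(\lt)$ in negative position in the start entry and in $S'$; the atom-counting argument that forces exactly one start/border entry at position~$1$ is the same.

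The one place where your plan is weaker than you claim is the assertion that freshness \emph{forces} the decomposition. In $\MILLFO$ without units, a premise $\Pi \vdash r(v_i)$ of $(\limpl L)$ need not satisfy $\Pi = \{r(v_i)\}$: it must contain the atom $r(v_i)$ (the only positive occurrence available), but it may additionally contain a nonempty ``unit-like'' sub-multiset of the remaining formulas --- for instance $p,\; p \limpl (q \limpl q),\; q,\; X \vdash X$ is derivable for an atom $X$ not occurring in the other formulas, and nothing in your setup rules out such a multiset arising from the $A_i$ (in particular when some of them are nullary). So a cut-free proof can divert some of the $A_i$ into the token branches, and the extracted sequent is not literally $A_1,\ldots,A_n \vdash S$. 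This is repairable: if $\Delta, X \vdash X$ with $X$ fresh for $\Delta$, then replacing the axiom $X \vdash X$ by $C \vdash C$ gives $\Delta, C \vdash C$, and cutting against the main branch reassembles the full sequent $A_1,\ldots,A_n \vdash S$. You should either add this recombination step explicitly or switch to the paper's disjoint-vocabulary formulation, where the splitting lemma makes the issue disappear. The same caveat applies to the $\Pi \vdash \mathrm{st}(v_0)$ premise in your reverse direction. Everything else --- the atom-balance argument pinning the unique start entry to position~$1$, the treatment of $\varepsilon$ in both directions, and the handling of letters of the wrong type --- is fine (though note that a letter $a$ with $\type(a) \ne \{\lt,\rt\}$ merely never occurs in a string graph; it does not force $L^{\str}(\Gram) \subseteq \{\varepsilon\}$, so you should simply omit such letters from $\triangleright'$).
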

This proposition is almost trivial; the only minor technicality is that hypergraph $\MILLFO$ grammars assign formulae to nodes while string $\MILLFO$ grammars are unable to do so. This technicality is the cause why we need to exclude the empty word in the second part of the proposition. See the proof of this proposition in Appendix \ref{appendix:proof-proposition:string-hypergraph-MILL1}.

\subsection{Hypergraph Transformation Rules as MILL1 Formulae}

Our main goal now is to describe a relation between hypergraph first-order linear logic grammars and hypergraph transformation systems. We start with showing how a ht-rule is encoded by a $\MILLFO$ formula. Let us fix a unary predicate $\nu(x)$; informally, it is understood as ``$x$ is a node''. Given a hypergraph $H$, let us treat its hyperedge labels as predicate symbols. Namely, if $\lab_H(e)=a$, then let us assume that the elements of $\type(a)$ are enumerated, i.e. $\type(a) = \{\sigma_1,\ldots,\sigma_n\}$; for any function $h:\type(A) \to \Var$, let $\lab_H(e)[h]$ stand for the formula $a(h(\sigma_1),\ldots,h(\sigma_n))$. The arity of $a$ is $n=\vert\type(a)\vert$. 
\begin{definition}\label{definition:diagram}
	The \emph{diagram of a hypergraph $H$} is the multiset $$\diag(H) \eqdef \{\lab_H(e)[\att_H(e)] \mid e \in E_H \} \cup \{ \nu(v) \mid v \in V_H \}.$$
	Let $D(H) \eqdef \exists \vec{v} \bigotimes \mathcal{D}(H)$ where $\vec{v}$ is the list of nodes in $V_H \setminus \ran(\ext_H)$.
\end{definition}

\begin{example}
	Let $H = 
	\vcenter{\hbox{{\tikz[baseline=.1ex]{
					\def\X{1.6}
					\def\Y{0.6}
					\def\H{1.3}
					\def\SHIFT{0}
					\node[node] (2N) at (\SHIFT,\Y) {};
					\node[node] at (\SHIFT,0) (2N1) {};
					\node[node] (2N2) at ($(\SHIFT+\X,0)$) {};
					\node[node] (2N3) at ($(\SHIFT+\X,\Y)$) {};
					\node[hyperedge] (2F) at ($(\SHIFT+\X/2,\Y)$)  {$C$};
					\node[hyperedge] (2F2) at ($(\SHIFT+\X+0.7,0)$)  {$Y$};
					\draw[->, thick] (2N1) to[bend right=20] node[above] {$D$} (2N2);
					\draw[<-, thick] (2N1) to[bend right=60] node[below] {$X$} (2N2);
					\draw[-] (2N2) -- (2F2);
					\draw[-] (2N) -- node[above] {\tiny 1} (2F);
					\draw[-] (2N1) -- node[above] {\tiny 2} (2F);
					\draw[-] (2N2) -- node[above] {\tiny 3} (2F);
	}}}}$ be a hypergraph such that $V_H = \{v_1,v_2,v_3,v_4\}$. Then
	$
		\diag(H) = C(v_1,v_3,v_4) , D(v_3,v_4) , X(v_4,v_3), Y(v_4), \nu(v_1), \nu(v_2), \nu(v_3), \nu(v_4).
	$
\end{example}

\begin{definition}
	Given a ht-rule $p = (H \to H^\prime)$, let \\
	$\fm(p) \eqdef \forall \vec{u} \left(D(H^\prime) \limpl D(H)[\chi_p] \right)$ where 
	\begin{itemize}
		\item $\vec{u}$ is the list of nodes in $\ran(\ext_{H^\prime})$;
		\item $\chi_p$ is a substitution function defined on $\ran(\ext_H)$ as follows: $\chi_p(\ext_H(\sigma)) = \ext_{H^\prime}(\sigma)$.
	\end{itemize}
\end{definition}

The formula $\fm(p)$ is closed. Note that $\chi_p$ is well defined because $\ext_H$ is injective.

\begin{example}
	Consider the ht-rule 
	$$p = \vcenter{\hbox{{\tikz[baseline=.1ex]{
					\node[node] (N) at (0,0.6) {};
					\node[node, label=left:{\tiny $(x)$}] (N1) at (-0.6,0) {};
					\node[node, label=right:{\tiny $(y)$}] (N2) at (0.6,0) {};
					\draw[->, thick] (N) -- node[above left] {$A$} (N1);
					\draw[->, thick] (N) -- node[above right] {$B$} (N2);
					\def\X{1.6}
					\def\Y{0.6}
					\def\SHIFT{2.4}
					\node at ($({(\SHIFT+0.6)/2},\Y/2)$) {$\to$};
					\node[node] (2N) at (\SHIFT,\Y) {};
					\node[node, label=left:{\tiny $(x)$}] at (\SHIFT,0) (2N1) {};
					\node[node, label=right:{\tiny $(y)$}] (2N2) at ($(\SHIFT+\X,0)$) {};
					\node[node] (2N3) at ($(\SHIFT+\X,\Y)$) {};
					\node[hyperedge] (2F) at ($(\SHIFT+\X/2,\Y)$)  {$C$};
					\draw[->, thick] (2N1) to[bend right=20] node[above] {$D$} (2N2);
					\draw[-] (2N) -- node[above] {\tiny 1} (2F);
					\draw[-] (2N1) -- node[above] {\tiny 2} (2F);
					\draw[-] (2N2) -- node[above] {\tiny 3} (2F);
					}}}}
				$$ 
	Let $u_1,u_2,u_3$ be the nodes of the left-hand side hypergraph (from left to right) and let $v_1,v_2,v_3,v_4$ be the nodes of the right-hand side hypergraph. Then 
	\begin{multline*}
		\fm(p) = \forall v_3.\forall v_4. 
		\big( 
		(\exists v_1.\exists v_2.C(v_1,v_3,v_4)\mconj D(v_3,v_4) \mconj \nu(v_1) \mconj \nu(v_2) \mconj \nu(v_3) \mconj \nu(v_4) ) 
		\limpl
		\\(\exists u_2. A(u_2,v_3) \mconj B(u_2,v_4) \mconj \nu(v_3) \mconj \nu(u_2) \mconj \nu(v_4))
		\big).
	\end{multline*}
	
\end{example}

The main lemma about $\fm(p)$ is presented below.
\begin{lemma}\label{lemma:main}
	Let $P,P^\prime$ be multisets of ht-rules and let $G,G^\prime$ be two hypergraphs with injective $\ext_G,\ext_{G^\prime}$. The sequent 
	\begin{equation}\label{equation:sequent-fm}
		\{\bang \fm(r) \mid r \in P \}, \{\fm(r) \mid r \in P^\prime \}, \diag(G^\prime) \vdash D(G)[\chi_{G \to G^\prime}]
	\end{equation}
	is derivable in $\ILLFO$ if and only if there exists a derivation of a hypergraph isomorphic to $G^\prime$ from $G$ which uses each rule from $P^\prime$ exactly once and that can use rules from $P$ any number of times.
\end{lemma}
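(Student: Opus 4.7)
The statement naturally splits into two implications, each proved by induction, and I would tackle them separately.

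For the forward direction I induct on the length $k$ of the hypergraph derivation $G = G_0 \Rightarrow G_1 \Rightarrow \dots \Rightarrow G_k$ with $G_k \cong G'$. In the base case $k = 0$, $G \cong G'$ and $D(G)[\chi_{G \to G'}]$ differs from $\bigotimes \diag(G')$ only by an existential prefix over the non-external nodes; witnessing these via $(\exists R)$ by their images under the isomorphism and then using $(\mconj R)$ and axioms closes the proof. For the inductive step, let $G \Rightarrow_p G_1$ be the first step, with $p=(H \to H') \in P \cup P'$, so $G = K[e/H]$ and $G_1 = K[e/H']$ for some context $K$ with injective $\att_K(e)$. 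The induction hypothesis yields a derivation of the sequent with $G_1$ in place of $G$ (and with $p$ removed from $P'$ if $p$ is linear, otherwise with $P$ untouched). I extend it using $\fm(p) = \forall \vec{u}(D(H') \limpl D(H)[\chi_p])$: successive $(\forall L)$ steps instantiate $\vec{u}$ by the interface nodes $\att_K(e)$, yielding $D(H')[\tau] \limpl D(H)[\chi_p \circ \tau]$ for the induced substitution $\tau$; a single $(\limpl L)$ splits the context into the atoms forming the $H'$-copy inside $G_1$ and the rest, the first branch proving $D(H')[\tau]$ after invertible decomposition and $(\exists R)$'s, and the second reassembling $D(G)[\chi_{G \to G'}]$ once $D(H)[\chi_p\circ\tau]$ is incorporated. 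For $p \in P$, a preliminary $(\bang L)$/dereliction step provides reusability.

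For the backward direction I invoke cut elimination, fix a cut-free proof $\Pi$ of the sequent, and induct on its height. Using invertibility of $(\exists L)$ and $(\mconj L)$ on the left diagram and of the right-hand introductions of $D(G)[\chi_{G \to G'}]$, I reduce to a configuration whose only compound left-hand formulae are the $\fm(r)$'s and $\bang\fm(r)$'s. If none of these is ever principal, then $P' = \emptyset$ is forced (linearity of the $\fm$'s would otherwise block closure), and the proof closes by axiom-matching between atoms, which yields a label- and type-preserving bijection between hyperedges and between nodes, i.e.\ $G \cong G'$; the corresponding derivation has length zero. Otherwise, isolate the topmost principal use of some $\fm(p)$ (or $\bang\fm(p)$): its $(\forall L)$-chain determines a substitution $\tau$ on the external nodes of $H'$, and the ensuing $(\limpl L)$ partitions the atomic context so that one branch derives $D(H')[\tau]$ from a submultiset $M$ of atoms. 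That $M$ must embed a copy of $H'$ into $G'$ with external nodes at $\tau$-images; pulling back through $\chi_p$, this identifies a context $\tilde K$ and a hyperedge $\tilde e$ with $G' = \tilde K[\tilde e/H']$, and setting $\tilde G \eqdef \tilde K[\tilde e/H]$ yields the legal ht-step $\tilde G \Rightarrow_p G'$ (injectivity of $\att_{\tilde K}(\tilde e)$ following from injectivity of the external-node maps). The residual right branch of $(\limpl L)$ is essentially a proof of the sequent with $\tilde G$ replacing $G'$ and with one fewer $\fm(p)$-occurrence (or the same count if $p \in P$), so the induction hypothesis supplies $G \Rightarrow^* \tilde G$; concatenating with $\tilde G \Rightarrow_p G'$ gives the required derivation, and the bookkeeping shows each rule of $P'$ is used exactly once.

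The main obstacle is the atom-matching step in the backward direction: showing that when a cut-free proof splits the atomic context along $(\limpl L)$ to supply $D(H')[\tau]$, the chosen atoms form a genuine, non-overlapping copy of $H'$ inside $G'$. Injectivity of $\att_{\tilde K}(\tilde e)$ and of the external-node functions rules out pathological splittings in which two distinct nodes of $H'$ would map to the same node of $G'$, while the first-order setting forces careful tracking of which node-variables are free, eigen-fixed by $(\forall L)$, or existentially witnessed; this scope bookkeeping, rather than the high-level induction, carries the main combinatorial content.
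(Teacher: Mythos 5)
Your overall architecture is reasonable and your forward direction is essentially right (the paper obtains it as a by-product of the same proof-search analysis that handles the converse). However, the backward direction, which you correctly identify as carrying all the weight, has two genuine problems. First, you propose to ``reduce to a configuration whose only compound left-hand formulae are the $\fm(r)$'s'' by invoking invertibility of ``the right-hand introductions of $D(G)[\chi_{G \to G^\prime}]$''. But $D(G)[\chi_{G\to G^\prime}]$ is an $\exists\bigotimes$-formula in the \emph{succedent}, and $(\exists R)$ and $(\mconj R)$ are not invertible: the choice of existential witnesses and the splitting of the context across $(\mconj R)$ are genuine nondeterministic choices that must be analysed, not normalised away. (The antecedent $\diag(G^\prime)$ is already atomic, so there is nothing to invert on the left either.) This case --- the succedent being principal --- is precisely the second statement of the paper's auxiliary lemma and cannot be skipped. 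Second, and more importantly, your justification of the atom-matching step is circular: you derive non-degeneracy of the embedding of $H^\prime$ into $G^\prime$ from ``injectivity of $\att_{\tilde K}(\tilde e)$'', but $\tilde K$ and $\tilde e$ are objects you are constructing \emph{from} the proof, so their injectivity is exactly what must be established. The actual mechanism in the paper is the $\nu$-predicate: $\diag(H)$ contains one atom $\nu(v)$ per node $v$, and the invariant that $\nu(x)$ occurs at most once per variable in every sequent arising in the proof search, combined with linearity of the context splits, is what forces every substitution produced by the $(\forall L)/(\exists R)$ instantiations to be injective and makes the node correspondence a bijection. Without identifying this device, the claim that the selected atoms ``form a genuine, non-overlapping copy of $H^\prime$'' does not follow --- hyperedge atoms alone neither see isolated nodes nor prevent two nodes of $H^\prime$ from collapsing onto one node of $G^\prime$.

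On method: where you propose a hand-rolled permutation argument (``isolate the topmost principal use of some $\fm(p)$'' and rearrange the cut-free proof), the paper instead translates the sequent into classical $\LLFO$ (using conservativity over $\ILLFO$) and works in Andreoli's focused calculus $\Sigma_3$, whose completeness delivers the required proof shape for free: each focusing phase on some $\fm^\bot(p)$ corresponds exactly to one ht-rule application, and the bottommost focus corresponds to the \emph{last} rewriting step, so the induction peels the hypergraph derivation from its final step rather than its first. Your plan could in principle be carried out directly in $\ILLFO$, but you would then owe a full rule-permutation analysis; if you want to complete the proof along your lines, I would recommend either adopting focusing or at least stating and proving the $\nu$-atom multiplicity invariant explicitly, since that is where the combinatorial content actually lives.
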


The proof of this lemma, placed in Appendix \ref{appendix:proof-lemma:main}, is quite technical; its idea is to do a proof search using focusing \cite{Andreoli92}. Using Lemma \ref{lemma:main} we can prove the following theorem.

\begin{theorem}\label{theorem:ILL1G=RE}
	Hypergraph $\ILLFO$ grammars generate the class of all recursively enumerable hypergraph languages.
\end{theorem}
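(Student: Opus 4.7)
The plan has two directions.

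For the easy inclusion, I would note that provability in $\ILLFO$ is recursively enumerable, and given a hypergraph $H$ there are only finitely many assignments $h_V, h_E$ consistent with $\triangleright$, so $L(\Gram)$ is r.e.\ for every hypergraph $\ILLFO$ grammar $\Gram$.

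The interesting direction reduces, via Proposition \ref{proposition:hts=re}, to simulating an arbitrary ht-system $\Gram_{ht}=\langle N,T,P,S_{ht}\rangle$ with $P=\{r_1,\dots,r_k\}$ by a hypergraph $\ILLFO$ grammar. After renaming we may assume $\ext_{S_{ht}}(\sigma)=\sigma$ for every $\sigma\in\type(S_{ht})$, so that $\FVar(D(S_{ht}))\subseteq\type(S_{ht})$. I would define $\Gram:=\langle T,S_\Gram,X,\triangleright\rangle$ by setting $X:=\type(S_{ht})$, prescribing $a\triangleright a(\sigma_1,\dots,\sigma_n)$ for each $a\in T$ with $\type(a)=\{\sigma_1,\dots,\sigma_n\}$, setting $\bullet\triangleright \nu(x_\bullet)$, and taking
\[
S_\Gram \;:=\; \bang\fm(r_1)\limpl\bigl(\bang\fm(r_2)\limpl\cdots\limpl(\bang\fm(r_k)\limpl D(S_{ht}))\cdots\bigr).
\]
Since each $\bang\fm(r_i)$ is closed and $\FVar(D(S_{ht}))\subseteq X$, we have $\FVar(S_\Gram)\subseteq X$, as required by Definition \ref{definition:hypergraph-L-grammar}.

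To check correctness, pick $H\in\HG(T)$ with $\type(H)=X$ and unfold the membership condition $H\in L(\Gram)$ using the chosen $\triangleright$. The multiset of edge-formulae becomes $\{\lab_H(e)[\att_H(e)]\mid e\in E_H\}$, the multiset of node-formulae becomes $\{\nu(v)\mid v\in V_H\}$, and the succedent is $S_\Gram[\ext_H]$. Applying $(\limpl R)$ (which is invertible by the cut-elimination remark in Section \ref{subsection:ILL1}) $k$ times, $H\in L(\Gram)$ iff
\[
\{\bang\fm(r_i)\mid 1\le i\le k\},\ \diag(H)\ \vdash\ D(S_{ht})[\ext_H]
\]
is derivable in $\ILLFO$. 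But this is exactly the sequent of Lemma \ref{lemma:main} with $G=S_{ht}$, $G'=H$, $P$ the set of rules, $P'=\emptyset$, because under our renaming the substitution $\chi_{S_{ht}\to H}$ coincides with $\ext_H$ on $\ran(\ext_{S_{ht}})=X$. Thus Lemma \ref{lemma:main} yields $H\in L(\Gram)$ iff $S_{ht}\Rightarrow^\ast H'$ for some $H'$ isomorphic to $H$, i.e.\ iff $H\in L(\Gram_{ht})$.

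The main obstacle, already handled by Lemma \ref{lemma:main}, is the equivalence between $\ILLFO$ proof-search and ht-derivations; everything else here is bookkeeping. The remaining subtlety is that the grammar a priori might accept some hypergraphs with non-injective $\ext_H$, whereas ht-systems produce only injective external-node hypergraphs; I would resolve this by noting that Proposition \ref{proposition:hts=re} (and the standing convention of the section) restricts attention to hypergraph languages over injective-external-node hypergraphs, which is the intended meaning of ``recursively enumerable hypergraph language'' in the statement.
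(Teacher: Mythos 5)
Your proposal is correct and follows essentially the same route as the paper: reduce to ht-systems via Proposition~\ref{proposition:hts=re}, encode the rules into the distinguished formula (you curry $\bang\fm(r_1)\limpl\cdots\limpl D(S_{ht})$ where the paper tensors them into a single antecedent $\bigotimes\{\bang\fm(r)\mid r\in P\}\limpl D(S)[h_S]$ --- equivalent up to invertibility of $(\limpl R)$ and $(\mconj L)$), assign atomic predicates to terminals and $\nu(x_\bullet)$ to nodes, and invoke Lemma~\ref{lemma:main} with $P'=\emptyset$. Your closing remark on non-injective $\ext_H$ is a point the paper glosses over (it is also dispatched by Lemma~\ref{lemma:main-appendix}, which forces injectivity of the substitution for the sequent to be derivable), but it does not change the argument.
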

\begin{proof}
	Clearly, all languages generated by hypergraph $\ILLFO$ grammars are recursively enumerable. To prove the converse, according to Proposition \ref{proposition:hts=re}, it suffices to show that any ht-system $\Gram = \langle N, T, P, S\rangle$ can be converted into a hypergraph $\ILLFO$ grammar generating the same language. 
	Define the hypergraph $\ILLFO$ grammar $\Gram^\prime = \langle T, S^\prime, \type(S), \triangleright \rangle$ such that
	\begin{enumerate}
		\item $S^\prime \eqdef \bigotimes \{\bang \fm(r) \mid r \in P\} \limpl D(S)[h_S]$ where $h_S(\ext_S(\sigma))=\sigma$ for $\sigma \in \type(S)$;
		\item $a \triangleright a(\sigma_1,\ldots,\sigma_n)$ for $a \in T$ and $\type(a)=\{\sigma_1,\ldots,\sigma_n\}$;
		\item $\bullet \triangleright \nu(x_\bullet)$.
	\end{enumerate}
	A hypergraph $H$ is accepted by $\Gram^\prime$ if and only if the following sequent is derivable in $\ILLFO$:
	$$
	\{\lab_H(e)[\att_H(e)] \mid e \in E_H\}, \{\nu(v) \mid v \in V_H\} \vdash S^\prime[\ext_H].
	$$
	Note that the antecedent of this sequent is the diagram of $H$ and that the succedent equals $\bigotimes \{\bang \fm(r) \mid r \in P\} \limpl D(S)[h_S][\ext_H] = \bigotimes \{\bang \fm(r) \mid r \in P\} \limpl D(S)[\chi_{S \to H}]$. By invertibility of the rules $(\limpl R)$ and $(\mconj L)$, the above sequent is equiderivable with the one
	$$
	\{\bang \fm(r) \mid r \in P\}, \diag(H) \vdash D(S)[\chi_{S \to H}]
	$$
	By Lemma \ref{lemma:main}, derivability of the latter sequent is equivalent to the fact that $H$ is derivable from $S$ using rules from $P$, i.e. that $H \in L(\Gram)$. Thus, $L(\Gram)=L(\Gram^\prime)$.
\end{proof}

This result justifies soundness of Definition \ref{definition:hypergraph-L-grammar}: if hypergraph grammars based on $\ILLFO$, which includes the powerful exponential modality, were not Turing-complete, this would indicate that we do not have enough control in the grammar formalism. For example, if we did not include node-formula assignment in Definition \ref{definition:hypergraph-L-grammar}, then Theorem \ref{theorem:ILL1G=RE} would be false.

\subsection{Hypergraph MILL1 Grammars}\label{subsection:hypergraph-MILL1-grammars}

We proceed to investigating expressive power of hypergraph $\MILLFO$ grammars. They are clearly less expressive than ht-systems because they generate only languages from NP. Nevertheless, it turns out that they as powerful as ht-systems in which the length of a derivation of a hypergraph is bounded by a linear function w.r.t. the size of the hypergraph.
\begin{definition}
	The size of a hypergraph $H$ denoted by $\vert H \vert$ is the number of nodes and hyperedges in $H$. A \emph{linear-time hypergraph transformation system} is a ht-system $\Gram = \langle N, T, P, S\rangle$ for which there is $c \in \Nat$ (a \emph{time constant}) such that, for each $H \in L(\Gram)$, there is a derivation $S \Rightarrow^\ast H$ with at most $c \cdot \vert H\vert$ steps.
\end{definition}

Adding linear-time bound to formal grammars has a long history. Linear-time type-0 Chomsky grammars were studied in \cite{Book71,Gladkii64}; linear-time one-tape Turing machines were studied in \cite{Tadaki10}; linear-time branching vector addition systems were studied in \cite{Pshenitsyn23} in the context of commutative Lambek grammars. However, to our best knowledge, linear-time graph grammars have not appeared in the literature. Linear-time ht-systems may be considered as the hypergraph counterpart of linear-time type-0 grammars studied in \cite{Book71,Gladkii64}, so it is quite a natural formalism. The main result concerning them is presented below.
\begin{theorem}\label{theorem:MILL1G>LTHTS}
	Each linear-time ht-system can be converted into an equivalent hypergraph $\MILLFO$ grammar.
\end{theorem}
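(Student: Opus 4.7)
The plan is to encode the linear-time bound $c \cdot |H|$ on the length of ht-derivations as a per-hyperedge and per-node ``budget'' of up to $c$ rule formulas $\fm(r)$, thereby avoiding the exponential modality that was essential in Theorem \ref{theorem:ILL1G=RE}. Given $\Gram = \langle N, T, P, S\rangle$ with time constant $c$, I would define the $\MILLFO$ grammar $\Gram' \eqdef \langle T, S', \type(S), \triangleright\rangle$ where $S' \eqdef D(S)[h_S]$ with $h_S(\ext_S(\sigma)) = \sigma$ for $\sigma \in \type(S)$, and where, for every $a \in T$ with $\type(a) = \{\sigma_1, \ldots, \sigma_n\}$ and every tuple $(r_1, \ldots, r_k) \in P^k$ with $0 \le k \le c$, the relation $\triangleright$ contains
$$a \triangleright a(\sigma_1,\ldots,\sigma_n) \mconj \fm(r_1) \mconj \cdots \mconj \fm(r_k), \qquad \bullet \triangleright \nu(x_\bullet) \mconj \fm(r_1) \mconj \cdots \mconj \fm(r_k).$$
This relation is finite because $T$, $P$, and $c$ are, and the free-variable conditions of Definition \ref{definition:hypergraph-L-grammar} hold since every $\fm(r)$ is closed.

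For the inclusion $L(\Gram) \subseteq L(\Gram')$, I would start from a derivation $S \Rightarrow^\ast H$ of length $k \le c \cdot |H|$ whose rule-multiset is $P^\ast$. Since the $|H| = |V_H| + |E_H|$ slots each support up to $c$ rule formulas, a greedy partition of $P^\ast$ into $|H|$ sub-multisets of size $\le c$ exists and yields an assignment $h_E, h_V$ whose values lie in $\triangleright$. Invertibility of $(\mconj L)$ reduces the sequent of Definition \ref{definition:language-hypergraph-L-grammar} to
$$\diag(H), \{\fm(r) \mid r \in P^\ast\} \vdash D(S)[\chi_{S \to H}],$$
which by Lemma \ref{lemma:main} (with the bang-multiset $P$ set to $\emptyset$) is derivable, so $H \in L(\Gram')$. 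The converse inclusion is symmetric: from any valid assignment $h_E, h_V$ I would extract the multiset $P^\ast$ of $\fm(r)$-atoms contributed, decompose by $(\mconj L)$ to the same displayed sequent, and apply Lemma \ref{lemma:main} backwards to obtain an ht-derivation $S \Rightarrow^\ast H' \cong H$ using each rule in $P^\ast$ exactly once.

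The main obstacle is the transfer of Lemma \ref{lemma:main} from $\ILLFO$, in which it is stated, to $\MILLFO$: since the reduced sequent lies entirely in the multiplicative-quantifier fragment and the bang-multiset is empty, any cut-free $\ILLFO$ proof of it uses only rules present in $\MILLFO$, so derivability transfers; alternatively, one can inspect the focused proof produced in Lemma \ref{lemma:main} and verify that it never fires an exponential rule when $P = \emptyset$. A secondary subtlety is the bookkeeping of per-slot capacities: admitting $k$ ranging over $0, \ldots, c$ rather than fixing $k = c$ is what makes the pigeonhole partition feasible for every $k \le c \cdot |H|$, and the degenerate case $|H| = 0$ forces $k = 0$ and $S = H$, consistent with the linear-time bound.
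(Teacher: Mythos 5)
Your proposal is correct and follows essentially the same route as the paper's proof: the identical grammar construction (attaching up to $c$ rule formulas $\fm(r)$ to each hyperedge- and node-formula via $\mconj$), reduction by invertibility of $(\mconj L)$ to the sequent $\{\fm(p)\mid p\in P'\},\diag(H)\vdash D(S)[\chi_{S\to H}]$, and an appeal to Lemma \ref{lemma:main} with the banged multiset empty. Your explicit remark that derivability transfers between $\ILLFO$ and $\MILLFO$ on this purely multiplicative-quantifier sequent is a point the paper leaves implicit, and is a welcome addition rather than a divergence.
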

\begin{proof}[Proof of Theorem \ref{theorem:MILL1G>LTHTS}]
	Let $\Gram = \langle N, T, P, S\rangle$ be a linear-time ht-system with the time constant $c \in \Nat$. Define the hypergraph $\MILLFO$ grammar $\Gram^\prime = \langle T, S^\prime, \type(S), \triangleright \rangle$ such that
	\begin{enumerate}
		\item $S^\prime \eqdef D(S)[h_S]$ where $h_S(\ext_S(\sigma))=\sigma$ for $\sigma \in \type(S)$;
		\item $a \triangleright a(\sigma_1,\ldots,\sigma_n) \mconj \fm(p_1) \mconj \ldots \mconj \fm(p_k)$ for $a \in T$, $\type(a)=\{\sigma_1,\ldots,\sigma_n\}$, $0 \le k \le c$, and for $p_1,\ldots,p_k \in P$;
		\item $\bullet \triangleright \nu(x_\bullet) \mconj \fm(p_1) \mconj \ldots \mconj \fm(p_k)$ for $0 \le k \le c$, and for $p_1,\ldots,p_k \in P$.
	\end{enumerate}
	A hypergraph $H$ is accepted by $\Gram^\prime$ if and only if, for each $e \in E_H$ (and for each $v \in V_H$), there exist at most $c$ rules from $P$, which we denote by $p_1^e,\ldots,p_{k(e)}^e$ (by $q_1^v,\ldots,q_{k(v)}^v$ resp.), such that the sequent
	\begin{multline*}
		\{\lab_H(e)[\att_H(e)]\mconj \fm(p^e_1) \mconj \ldots \mconj \fm(p^e_{k(e)})  \mid e \in E_H\}, 
		\\
		\{\nu(v)\mconj \fm(q^v_1) \mconj \ldots \mconj \fm(q^v_{k(v)}) \mid v \in V_H\} \vdash S^\prime[\ext_H]
	\end{multline*}
	is derivable in $\MILLFO$. By invertibility of $(\mconj L)$, this is equivalent to the fact that there exists a multiset $P^\prime$ of cardinality at most $c \cdot \vert E_H \vert + c \cdot \vert V_H \vert = c \cdot \vert H \vert$ such that all its elements are from $P$ and such that the following sequent is derivable in $\MILLFO$:
	$$
		\{\fm(p) \mid p \in P^\prime\}, \diag(H) \vdash D(S)[\chi_{S \to H}]
	$$
	By Lemma \ref{lemma:main}, this is equivalent to the fact that there is a derivation of a hypergraph $H^\prime$ isomorphic to $H$ from $S$ that uses each rule from $P^\prime$ exactly once. Existence of $P^\prime$ satisfying these properties is equivalent to the fact that there is a derivation of $H^\prime$ from $S$ of length at most $c \cdot \vert H \vert$, which is equivalent to $H \in L(\Gram)$. Thus, $L(\Gram)=L(\Gram^\prime)$.
\end{proof}
The question arises whether the converse holds as well. We are not going to address it in the article because of space-time limitations and also because we are mainly interested in lower bounds for the class of hypergraph $\MILLFO$ grammars. Still, we claim that it is possible to convert each hypergraph $\MILLFO$ grammar into a linear-time ht-system \emph{with non-injective rules}, i.e. with rules $H \to H^\prime$ where $\ext_{H^\prime}$ is allowed to be non-injective. This could be done by a straightforward (yet full of tiring technical details) encoding of $\MILLFO$ inference rules by hypergraph transformations. We leave proving that for the future work.

Speaking of upper bounds, we noted that all languages generated by hypergraph $\MILLFO$ grammars are in NP; besides, as we shall show in Theorem \ref{theorem:MILL1G-NPC}, there is an NP-complete language of string graphs generated by a hypergraph $\MILLFO$ grammars, so the NP upper bound is accurate. 

Let us further explore properties of the class of languages generated by hypergraph $\MILLFO$ grammars. It turns out to be closed under intersection.
\begin{theorem}\label{theorem:hypergraph-MILL1-grammars-intersection}
	Languages generated by hypergraph $\MILLFO$ grammars are closed under intersection.
\end{theorem}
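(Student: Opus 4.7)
The plan is to combine the two grammars via multiplicative conjunction. Let $\Gram_i = \langle T, S_i, X, \triangleright_i\rangle$ for $i=1,2$ be the given hypergraph $\MILLFO$ grammars (if their alphabets or external type sets disagree, $L(\Gram_1)\cap L(\Gram_2)$ is trivially empty, so there is nothing to prove). First rename predicate symbols so that the sets of predicate symbols used by $\Gram_1$ and $\Gram_2$ are disjoint, and then construct $\Gram := \langle T, S_1 \mconj S_2, X, \triangleright\rangle$ where $a \triangleright A_1 \mconj A_2$ iff $a \triangleright_1 A_1$ and $a \triangleright_2 A_2$, and analogously for $\bullet$. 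Finiteness of $\triangleright$ and the free-variable conditions of Definition~\ref{definition:hypergraph-L-grammar} are immediate, and it remains to show $L(\Gram) = L(\Gram_1) \cap L(\Gram_2)$ in both directions.

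The inclusion $L(\Gram_1) \cap L(\Gram_2) \subseteq L(\Gram)$ is the easy direction: given $H$ in the intersection with witnesses $(h_V^i, h_E^i)$ for $\Gram_i$, set $h_V(v) := h_V^1(v) \mconj h_V^2(v)$ and $h_E(e) := h_E^1(e) \mconj h_E^2(e)$; the required sequent then follows from the two individual derivations by a single application of $(\mconj R)$ (after rearranging the antecedent into the two disjoint halves).

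For the converse, if $H \in L(\Gram)$ with witnesses $h_V, h_E$, these must factor as $h_E(e) = A_1^e \mconj A_2^e$ and $h_V(v) = B_1^v \mconj B_2^v$, with $\lab_H(e) \triangleright_i A_i^e$ and $\bullet \triangleright_i B_i^v$. Invertibility of $(\mconj L)$ together with the distribution of substitution over $\mconj$ reduces the acceptance sequent to
$$\Gamma_1, \Gamma_2 \vdash S_1[\ext_H] \mconj S_2[\ext_H],$$
where $\Gamma_i$ is the multiset of instantiated $\Gram_i$-formulas. The crux is then a signature-splitting property of $\MILLFO$: \emph{if $\Sigma_1, \Sigma_2$ are disjoint sets of predicate symbols and each $\Gamma_i, B_i$ uses only $\Sigma_i$-predicates, then derivability of $\Gamma_1, \Gamma_2 \vdash B_1 \mconj B_2$ implies derivability of each $\Gamma_i \vdash B_i$.} Instantiating this with $B_i := S_i[\ext_H]$ recovers witnessing derivations for $H \in L(\Gram_i)$ via the assignments $h_V^i := B_i^\bullet$, $h_E^i := A_i^\bullet$.

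The main obstacle is the signature-splitting claim. Intuitively, the subformula property of cut-free $\MILLFO$ confines every predicate symbol appearing in a derivation to those of the end-sequent, and atomic axioms pair each predicate occurrence with another of the same symbol; since no $\Sigma_1$-atom can ever match a $\Sigma_2$-atom, the derivation ought to decompose into two independent sub-derivations. Making this rigorous, so that the partition respects the quantifier rules and keeps their eigenvariables fresh on each side, is technical; the cleanest route is via first-order proof nets, where the axiom linking splits into two connected components along signature lines, yielding the two required derivations. A purely syntactic alternative is induction on a cut-free proof in which rules have been permuted so that the $(\mconj R)$ decomposing $S_1[\ext_H] \mconj S_2[\ext_H]$ is placed immediately below any rule acting purely within one signature.
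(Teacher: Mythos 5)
Your proposal is correct and follows essentially the same route as the paper: the same product construction with $S_1 \mconj S_2$ and pairwise $\mconj$-combined lexical assignments over disjoint predicate alphabets, reduced via invertibility of $(\mconj L)$ to the same key splitting lemma. The paper proves that lemma by a joint induction on derivations (pairing the $\mconj$-splitting statement with the auxiliary claim that a derivable sequent whose succedent lies in one signature has its whole antecedent in that signature, which is exactly what rules out the cross-signature context splits you worry about at the $(\mconj R)$ step), rather than via proof nets, but this is only a difference in how the same lemma is discharged.
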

\begin{proof}
	Let $\Gram_i = \langle T_i, S_i, X_i, \triangleright_i \rangle$ be two hypergraph $\MILLFO$ grammars; we aim to construct a hypergraph $\MILLFO$ grammar generating $L(\Gram_1) \cap L(\Gram_2)$. Let us assume without loss of generality that $T_1=T_2 = T$; also, let us assume that predicate symbols used by $\Gram_1$ and $\Gram_2$ are disjoint. Let us denote the set of subformulas of formulas occuring in $\Gram_i$ by $\Fm_i$ ($i=1,2$). Note that, if $X_1 \ne X_2$, then $L(\Gram_1) \cap L(\Gram_2) = \emptyset$ (hypergraphs in these languages would have different types); thus, we can assume that $X_1=X_2$ (let us denote this set by $X$).
	
	Define the grammar $\Gram \eqdef \langle T, S, X, \triangleright \rangle$ where $S = S_1 \mconj S_2$ and $\triangleright$ is the smallest relation such that $a \triangleright A_1 \mconj A_2$ holds whenever $a \in T \cup \{\bullet\}$ and $a \triangleright_i A_i$ for $i=1,2$.
	\begin{lemma}[splitting lemma]\label{lemma:splitting}
	\leavevmode
	\begin{enumerate}
		\item If the sequent $A_1,\ldots,A_n \vdash B$ is derivable in $\MILLFO$ such that $A_i \in \Fm_1 \cup \Fm_2$ and $B \in \Fm_k$ for some $k \in \{1,2\}$, then all $A_i$ are also from $\Fm_k$.
		\item The sequent $A_1,\ldots,A_n,B_1,\ldots,B_m \vdash A \mconj B$ such that $A_i$ and $A$ are from $\Fm_1$ and $B_i$, $B$ are from $\Fm_2$ is derivable in $\MILLFO$ if and only if the sequents $A_1,\ldots,A_n \vdash A$ and $B_1,\ldots,B_m \vdash B$ are derivable in $\MILLFO$.
	\end{enumerate}
	\end{lemma}
	Both statements are proved jointly by straightforward induction on the length of a derivation.
	
	A hypergraph $H$ is accepted by $\Gram$ if and only if, for $i=1,2$, there exist functions $h^i_V:V_H \to \Fm_i$ and $h^i_E:E_H \to \Fm_i$ such that $\bullet \triangleright_i h^i_V(v)$ for $v \in V_H$, $\lab_H(e) \triangleright_i h^i_E(v)$ for $e \in E_H$, and the following sequent is derivable in $\MILLFO$:
	$$
	\{(h_E^1(e)\mconj h_E^2(e))[\att_H(e)] \mid e \in E_H\},
	\{(h_V^1(v)\mconj h_V^2(v))[v/x_\bullet] \mid v \in V_H\}
	\vdash
	(S_1 \mconj S_2)[\ext_H].
	$$
	By invertibility of $(\mconj L)$ and splitting lemma, this is equivalent to derivability of the sequents $\{h_E^i(e)[\att_H(e)] \mid e \in E_H\},
	\{h_V^i(v)[v/x_\bullet] \mid v \in V_H\}
	\vdash
	S_i[\ext_H]$ for $i=1,2$, hence is equivalent to the fact that $H$ belongs to $L(\Gram_1) \cap L(\Gram_2)$.
\end{proof}
An analogous technique cannot be used for Lambek categorial grammars because the Lambek calculus is non-commutative, and splitting lemma does not hold for it. In fact, since Lambek categorial grammars generate context-free languages \cite{Pentus93}, which are not closed under intersection, a similar result does not hold for Lambek categorial grammars.

\section{Expressive Power of String MILL1 Grammars}\label{section:MILL1-grammars}

Now, let us apply the results and techniques developed for hypergraph $\MILLFO$ grammars to describing the class of languages generated by string $\MILLFO$ grammars. First, Proposition \ref{proposition:string-hypergraph-MILL1} and Theorem \ref{theorem:MILL1G>LTHTS} imply the following lower bound.
\begin{corollary}\label{corollary:string-MILL1-grammars}
	If $\Gram = \langle N, T, P, S \rangle$ is a linear-time ht-system, then $\{w \in T^+ \mid \SG(w) \in L(\Gram)\}$ is generated by a string $\MILLFO$ grammar.
\end{corollary}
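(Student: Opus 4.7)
The corollary is essentially a two-step composition of results already stated in the excerpt, so my plan is to chain them together, making explicit why the restriction to $T^+$ is the right thing to ask for.

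First, I would apply Theorem \ref{theorem:MILL1G>LTHTS} to the given linear-time ht-system $\Gram = \langle N, T, P, S\rangle$ to obtain a hypergraph $\MILLFO$ grammar $\Gram^\prime$ such that $L(\Gram^\prime) = L(\Gram)$. From the definition of the induced string language,
\[
L^\str(\Gram^\prime) = \{w \mid \SG(w) \in L(\Gram^\prime)\} = \{w \mid \SG(w) \in L(\Gram)\},
\]
so the set appearing in the statement of the corollary coincides exactly with $L^\str(\Gram^\prime) \setminus \{\varepsilon\}$.

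Second, I would invoke the second half of Proposition \ref{proposition:string-hypergraph-MILL1}, which asserts that for any hypergraph $\MILLFO$ grammar $\Gram^\prime$ the language $L^\str(\Gram^\prime) \setminus \{\varepsilon\}$ is generated by some string $\MILLFO$ grammar. Instantiating this with the $\Gram^\prime$ produced in the first step yields the desired string $\MILLFO$ grammar for $\{w \in T^+ \mid \SG(w) \in L(\Gram)\}$, finishing the argument.

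There is no genuine obstacle, but one delicate point deserves to be flagged: the formulation uses $T^+$ rather than $T^\ast$, and this is not cosmetic. As noted after Proposition \ref{proposition:string-hypergraph-MILL1}, a hypergraph $\MILLFO$ grammar can accept $\SG(\varepsilon)$ via the node-formula assignment for the two external nodes, whereas a string $\MILLFO$ grammar, whose acceptance condition requires a non-empty antecedent of formulae assigned to actual letters, cannot produce the empty word. Excluding $\varepsilon$ by restricting to $T^+$ is exactly what makes the second step apply verbatim; without this restriction the corollary would fail in general, and the proof would have to address whether $\SG(\varepsilon) \in L(\Gram)$ separately. Beyond this remark, everything reduces to citing Theorem \ref{theorem:MILL1G>LTHTS} and Proposition \ref{proposition:string-hypergraph-MILL1}.
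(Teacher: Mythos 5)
Your proof is correct and matches the paper exactly: the corollary is stated there as an immediate consequence of Theorem \ref{theorem:MILL1G>LTHTS} followed by the second half of Proposition \ref{proposition:string-hypergraph-MILL1}, which is precisely your two-step chain, and your identification of the target language with $L^\str(\Gram^\prime)\setminus\{\varepsilon\}$ is the right way to see why the restriction to $T^+$ is needed. (Your side remark slightly misattributes the reason for excluding $\varepsilon$ --- the issue is that the string-grammar construction cannot replicate the node-formula assignment when there are no letters, not that string $\MILLFO$ grammars can never accept the empty word --- but this does not affect the argument.)
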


Next, the following result can be proved in the same way as Theorem \ref{theorem:hypergraph-MILL1-grammars-intersection}.
\begin{theorem}
	Languages generated by string MILL1 grammars are closed under intersection.
\end{theorem}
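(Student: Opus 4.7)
The plan is to adapt the construction from the proof of Theorem~\ref{theorem:hypergraph-MILL1-grammars-intersection} to the simpler string setting, where there is no node–formula assignment to worry about. Given two string $\MILLFO$ grammars $\Gram_i = \langle T, S_i, \triangleright_i \rangle$ for $i=1,2$, I would first rename predicate symbols (without loss of generality) so that those used by $\Gram_1$ and $\Gram_2$ are disjoint, and let $\Fm_i$ denote the set of subformulas of formulas occurring in $\Gram_i$. Define the product grammar $\Gram \eqdef \langle T, S_1 \mconj S_2, \triangleright \rangle$ where $\triangleright$ is the smallest relation satisfying $a \triangleright A_1 \mconj A_2$ whenever $a \triangleright_i A_i$ for $i=1,2$.

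To verify that $L(\Gram) = L(\Gram_1) \cap L(\Gram_2)$, observe that a string $a_1 \ldots a_n$ belongs to $L(\Gram)$ iff there exist pairs $A_i^1, A_i^2$ with $a_i \triangleright_k A_i^k$ such that the sequent
$$
(A_1^1 \mconj A_1^2)[x_0/\lt, x_1/\rt], \ldots, (A_n^1 \mconj A_n^2)[x_{n-1}/\lt, x_n/\rt] \vdash (S_1 \mconj S_2)[x_0/\lt, x_n/\rt]
$$
is derivable in $\MILLFO$. By invertibility of $(\mconj L)$, this is equivalent to derivability of the same sequent with each $A_i^1 \mconj A_i^2$ decomposed into the two formulas $A_i^1, A_i^2$ side-by-side in the antecedent.

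The key step is then to invoke the splitting lemma (Lemma~\ref{lemma:splitting}): since the antecedent formulas partition cleanly between $\Fm_1$ and $\Fm_2$, and the two conjuncts on the right belong respectively to $\Fm_1$ and $\Fm_2$, the sequent splits into two independent derivable sequents. These are precisely the acceptance conditions $A_1^1[x_0/\lt, x_1/\rt], \ldots, A_n^1[x_{n-1}/\lt, x_n/\rt] \vdash S_1[x_0/\lt, x_n/\rt]$ for $\Gram_1$ and the analogous one for $\Gram_2$, yielding $a_1 \ldots a_n \in L(\Gram_1) \cap L(\Gram_2)$. The converse direction is immediate by reversing the argument (using the ``if'' direction of the splitting lemma followed by $(\mconj R)$ and $(\mconj L)$ applications). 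I do not anticipate any real obstacle: the splitting lemma already does the heavy lifting, and the string case is strictly simpler than the hypergraph case, since Definition~\ref{definition:string-L-grammar} carries no node-assignment component and commutativity of the antecedent means the ordering induced by the variable chain $x_0, \ldots, x_n$ is preserved on both sides of the split.
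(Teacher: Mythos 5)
Your proposal is correct and follows essentially the same route as the paper, which proves this theorem by remarking that it goes "in the same way as" the hypergraph intersection theorem: form the product grammar with tensored formulas, apply invertibility of $(\mconj L)$, and invoke the splitting lemma to decompose the sequent into the two acceptance conditions. The paper's only additional remark is that one cannot instead derive the string result from the hypergraph one via Proposition~\ref{proposition:string-hypergraph-MILL1} because of the empty-string issue, but your direct construction avoids that entirely.
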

(Note that we cannot directly infer this theorem from Proposition \ref{proposition:string-hypergraph-MILL1} and Theorem \ref{theorem:hypergraph-MILL1-grammars-intersection} because of the empty string issue.)
Since languages generated by string MILL1 grammars contain all context-free languages, they also contain their finite intersections, in particular, the language 
\begin{multline*}
	\{(a^nb^n)^n \mid n > 0 \} = \{a^{n_1}b^{n_1}\ldots a^{n_k}b^{n_k} \mid n_1,\ldots,n_k \in \Nat\} \\ \cap \{a^{k}b^{n_1}a^{n_1}\ldots b^{n_{k-1}} a^{n_{k-1}} b^l \mid k,l,n_1,\ldots,n_{k-1} >0 \}.
\end{multline*}
It is simple to prove that languages generated by string $\MILLFO$ grammars are also closed under letter-to-letter homomorphisms, so the language $\{a^{2n^2} \mid n \in \Nat\}$ can be generated by a string $\MILLFO$ grammar as well. Thus, string $\MILLFO$ grammars generate languages with non-semilinear Parikh images.

Note that, for Lambek categorial grammars, there is imbalance between expressive power and algorithmic complexity. Namely, on the one hand, Lambek categorial grammars generate exactly context-free languages, all of which are polynomially parsable, but on the other hand, parsing in the Lambek calculus is an NP-complete problem \cite{Pentus06}. This is not the case for $\MILLFO$ grammars, as the following theorem shows.
\begin{theorem}\label{theorem:MILL1G-NPC}
	String $\MILLFO$ grammars generate an NP-complete language.
\end{theorem}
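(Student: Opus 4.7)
The plan is to apply Corollary \ref{corollary:string-MILL1-grammars}: it suffices to exhibit a linear-time hypergraph transformation system $\Gram$ such that $\{w \in T^+ \mid \SG(w) \in L(\Gram)\}$ is NP-hard. The matching NP upper bound on every language generated by a string $\MILLFO$ grammar is automatic, since derivability in $\MILLFO$ is in NP (a derivation of polynomial size is a certificate) and the definition of the generated language only requires guessing the assignment of formulae to letters and then such a derivation. Thus, once such a $\Gram$ is produced, the resulting string $\MILLFO$ language is NP-complete.

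I would target 3SAT and fix a small terminal alphabet encoding 3CNF formulas (digits, a polarity marker, literal/clause separators, end-marker). The ht-system $\Gram$ will generate exactly the encodings of satisfiable 3CNF formulas by interleaving two tasks along a moving ``frontier'' nonterminal hyperedge: first, nondeterministically building a pool of variable-nodes, each permanently tagged by a unary auxiliary hyperedge with a guessed truth value; and second, printing the formula symbol by symbol. Each clause is emitted by a constant number of rules that (i) print the three literals with separators, (ii) nondeterministically commit to one of them as the satisfying literal, and (iii) link that literal, via a short-lived pointer hyperedge, back to the appropriate variable-node, whose tag must match the literal's polarity. The binary encoding of a variable index is emitted one digit per $O(1)$ rules, so the total derivation length is linear in the length of the output string, meeting the hypothesis of Corollary \ref{corollary:string-MILL1-grammars}.

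The main obstacle will be the injectivity conditions baked into Definition \ref{definition:ht-rule-system}: not only must the left- and right-hand sides of every rule have injective $\ext$ maps, but each rewriting step $G \Rightarrow_r G^\prime$ requires the attachment $\att_K(e)$ of the matched hyperedge to be injective. A single variable-node will be referenced by arbitrarily many clauses in the output, so a naive rule that attaches the frontier hyperedge simultaneously to fresh frontier nodes and to the shared variable-node can, at large enough stages, attach two coinciding nodes and violate injectivity. The remedy is architectural: the frontier hyperedge never attaches directly to a shared variable-node; instead, access is mediated by a disposable pointer hyperedge, produced freshly when a clause is generated and consumed once the match to the variable-node's tag is verified. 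Under this discipline every attachment used during the derivation stays injective. The remaining work is routine bookkeeping to turn the sketch above into concrete ht-rules, after which Corollary \ref{corollary:string-MILL1-grammars} produces the desired string $\MILLFO$ grammar.
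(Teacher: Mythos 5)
Your high-level strategy is exactly the paper's: invoke Corollary \ref{corollary:string-MILL1-grammars}, so that everything reduces to exhibiting a linear-time ht-system whose language of string graphs is NP-hard (the NP upper bound being automatic, as you say). The difficulty is entirely in the concrete construction, and there your sketch has a genuine soundness gap. You build a pool of variable-nodes, ``each permanently tagged \ldots with a guessed truth value,'' and each clause certifies itself by pointing at \emph{a} variable-node with the right index and a matching tag. Nothing in the sketch prevents the pool from containing two nodes carrying the same variable index with opposite tags; a clause containing $x_5$ can then point at the TRUE-tagged copy while a clause containing $\lnot x_5$ points at the FALSE-tagged copy, and the system generates the encoding of a 3CNF formula that need not be satisfiable. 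That breaks the reduction in the direction that matters ($\phi$ unsatisfiable must imply $\mathrm{enc}(\phi)\notin L$). This is fixable --- e.g.\ generate the pool as a chain with consecutive indices so uniqueness is structural --- but it is a real missing idea, not bookkeeping, and it sits on top of two further nontrivial obligations you wave at: making the binary index printed in the string provably agree with the index of the node being pointed at, and cleaning up all pool nodes, index chains, and tags so that the final hypergraph is literally $\SG(w)$ with the correct two external nodes. (By contrast, the injectivity of $\att_K(e)$ that you single out as the main obstacle is comparatively mild: it only requires the images of the external nodes of a rule's left-hand side to be distinct in the host graph, which your frontier-plus-variable-node rules already satisfy.)

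It is worth seeing how the paper avoids all of this. Its main proof constructs nothing: it cites Book's 1978 theorem that some linear-time \emph{string} rewriting system generates an NP-complete language, and observes that string rewriting rules $\alpha\to\beta$ translate verbatim into ht-rules $\SG(\alpha)\to\SG(\beta)$ on string graphs. Its appendix proof, closer in spirit to yours, reduces from exact cover by 3-sets and is architected so that the two sides of the matching (the multiset $\{w_1,\ldots,w_m\}$ and the selected triples) are generated \emph{simultaneously} from paired nonterminals growing in two places of the same string at once; there is no persistent shared structure, no pointers, and no cleanup phase, so the duplicate-identity problem never arises. If you want to salvage your 3SAT construction, adopting that ``generate both occurrences in lockstep'' discipline --- or simply enforcing consecutive indices in the pool --- is the missing ingredient.
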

A straightforward proof of this theorem can be found in Appendix \ref{appendix:proof-theorem:MILL1G-NPC}. There, we present a linear-time ht-system generating an NP-complete language of string graphs. Here, we shall present another proof that relies on a result from \cite{Book78}.
\begin{proof}
	Consider a string rewriting system $\mathcal{S} = \langle N,T,P,S \rangle$ where $P$ is a finite set of rules of the form $\alpha \to \beta$ where $\alpha,\beta \in (N\cup T)^\ast$ are arbitrary strings and $S \in (N \cup T)^\ast$ is the start string. Recall that $\eta \alpha \theta \Rightarrow_{\mathcal{S}} \eta \beta \theta$ whenever $(\alpha \to \beta) \in P$ (and no other pairs of strings are in the relation $\Rightarrow_{\mathcal{S}}$); recall also that $L(\mathcal{S}) = \{w \in T^\ast \mid S \Rightarrow_{\mathcal{S}}^\ast w \}$. 
	
	Let us convert $\mathcal{S}$ into the ht-system $\mathcal{G} = \langle N, T, P^\prime, \SG(S)\rangle$ where the symbols from $N \cup T$ have the type $\{\lt,\rt\}$ and $P^\prime = \{\SG(\alpha) \to \SG(\beta) \mid (\alpha \to \beta) \in P\}$. (Let us assume that, in each rule $\alpha \to \beta$, $\alpha$ and $\beta$ are nonempty.) It is straightforward to check that $L(\mathcal{G}) = \{\SG(w) \mid w \in L(\mathcal{S})\}$. Finally, \cite[Theorem 1]{Book78} says that there is a linear-time string rewriting system $\mathcal{S}$ that generates an NP-complete language. Thus, if one constructs $\mathcal{G}$ from it and then applies Corollary \ref{corollary:string-MILL1-grammars}, then they obtain a string $\MILLFO$ grammar generating an NP-complete language.
\end{proof}

One of the reviewers pointed out that string $\MILLFO$ grammars generating nonsemilinear languages are too much for linguistic applications, where it is widely assumed that natural languages are semilinear. The reviewer asked whether restricting $\MILLFO$ to the fragment where each (free or bound) variable occurs in each formula at most twice results in languages generated by the corresponding class of grammars being semilinear. I am afraid that, even with this restriction, string $\MILLFO$ grammars generate some non-semilinear and NP-complete languages. Let us consider the above construction from Theorem \ref{theorem:MILL1G-NPC}. E.g.~if $AB \to BCD$ is a string rewriting rule, then 
\begin{multline*}
	\fm(\SG(AB) \to \SG(BCD)) = \forall v_1.\forall v_2. \big[\left(\exists x.A(v_1,x)\otimes B(x,v_2) \otimes \nu(v_1) \otimes \nu(x) \otimes \nu(v_2)\right) \limpl  \\ \left(\exists y. \exists z.B(v_1,y)\otimes C(y,z) \otimes D(z,v_2) \otimes \nu(v_1) \otimes \nu(y) \otimes \nu(z) \otimes \nu(v_2)\right)\big].
\end{multline*}
In a formula of the form $\fm(\SG(\alpha) \to \SG(\beta))$, each variable occurs at most four times. However, we claim that one could remove $\nu$ predicates everywhere, which would result in each variable occurring in each formula at most twice. We also claim that Lemma \ref{lemma:main} would remain true without $\nu$ predicates in formulas in case where we consider only string graphs. Thus, we can use Book's result from \cite{Book78} to generate an NP-complete and a non-semilinear language by a grammar over the restricted fragment of $\MILLFO$. We do not provide formal proofs but leave them for the future work.

\section{Hypergraph Language Semantics}\label{section:semantics}

Now, let us proceed to model-theoretic investigations into $\MILLFO$. Our objective is to generalise language models for the Lambek calculus to $\MILLFO$ and to devise \emph{hypergraph language models}. This will enable one to regard $\MILLFO$ as a logic for reasoning about hypergraph resources. The most important question is how the composition of such resources should be defined: if $H_1,H_2$ are two hypergraphs, then how should one understand ``$H_1 \mconj H_2$''? 

Language semantics for the Lambek calculus, algebraically speaking, is a mapping of $\mathrm{L}$ formulae to a free semigroup of words which interprets product $A \cdot B$ as elementwise product of interpretations of $A$ and $B$. What is the hypergraph counterpart of free semigroups? In the field of hyperedge replacement, there are algebras of hypergraphs called HR-algebras \cite{Courcelle90,CourcelleE12,Rozenberg97}. They include the parallel composition operation and source manipulating operations. Parallel composition is a way of gluing hypergraphs defined as follows.
\begin{definition}\label{definition:pc}
	Let $H_1$ and $H_2$ be hypergraphs. Let $\sim$ be the smallest equivalence relation on $V_{H_1} \sqcup V_{H_2}$ such that $\ext_{H_1}(\sigma) \sim \ext_{H_2}(\sigma)$ for $\sigma \in \type(H_1) \cap \type(H_2)$. Then, \emph{parallel composition $H_1 \pc H_2$} is the hypergraph $H$ such that $\type(H) = \type(H_1) \cup \type(H_2)$; $V_H = (V_{H_1} \sqcup V_{H_2})/\sim$, $E_H = E_{H_1} \sqcup E_{H_2}$; $\att_H(e)(\sigma) = [\att_{H_i}(e)(\sigma)]_\sim$, $\lab_H(e)=\lab_{H_i}(e)$ for $e \in E_{H_i}$; $\ext_H(\sigma)=[\ext_{H_i}(\sigma)]_\sim$ for $\sigma \in \type(H_1) \cup \type(H_2)$.
\end{definition}
Informally, $H_1 \pc H_2$ is obtained by taking the disjoint union of $H_1$ and $H_2$ and fusing $\ext_{H_1}(\sigma)$ with $\ext_{H_2}(\sigma)$ for $\sigma \in \type(H_1) \cap \type(H_2)$. This operation is illustrated on Figure \ref{figure:pc-sub}. Note that parallel composition is associative and commutative.

\begin{figure}
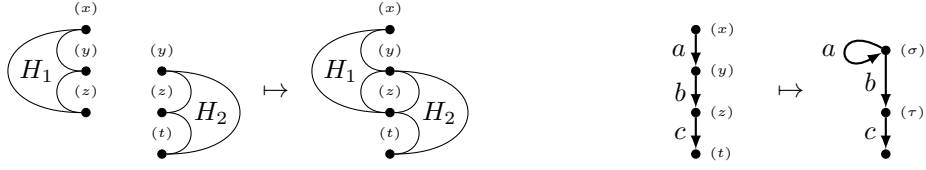

	\centering
	
	{\tikz[baseline=.1ex]{
			\def\X{1}
			\def\XX{4}
			\def\Y{0.55}
			\def\BEND{30}
			\node[node, label=above:{\tiny $(x)$}] (V1) at ($(0,2*\Y)$) {};
			\node[node, label=above:{\tiny $(y)$}] (V2) at ($(0,1*\Y)$) {};
			\node[node, label=above:{\tiny $(z)$}] (V3) at ($(0,0*\Y)$) {};
			\node at ($(-0.65,1*\Y)$) {$H_1$};
			\draw[-] (V1) to[bend right=90,looseness=3] (V3);
			\draw[-] (V1) to[bend right=90,looseness=2] (V2);
			\draw[-] (V2) to[bend right=90,looseness=2] (V3);
			\node[node, label=above:{\tiny $(y)$}] (2V1) at ($(\X,1*\Y)$) {};
			\node[node, label=above:{\tiny $(z)$}] (2V2) at ($(\X,0*\Y)$) {};
			\node[node, label=above:{\tiny $(t)$}] (2V3) at ($(\X,{(-1)*\Y})$) {};
			\node at ($(\X+0.65,0*\Y)$) {$H_2$};
			\draw[-] (2V1) to[bend left=90,looseness=3] (2V3);
			\draw[-] (2V1) to[bend left=90,looseness=2] (2V2);
			\draw[-] (2V2) to[bend left=90,looseness=2] (2V3);
			\node at ($({(\XX+\X)/2},{0.5*\Y})$) {$\mapsto$};
			\node[node, label=above:{\tiny $(x)$}] (3V1) at ($(\XX,2*\Y)$) {};
			\node[node, label=above:{\tiny $(y)$}] (3V2) at ($(\XX,1*\Y)$) {};
			\node[node, label=above:{\tiny $(z)$}] (3V3) at ($(\XX,0*\Y)$) {};
			\node[node, label=above:{\tiny $(t)$}] (3V4) at ($(\XX,-\Y)$) {};
			\node at ($(\XX-0.65,1*\Y)$) {$H_1$};
			\draw[-] (3V1) to[bend right=90,looseness=3] (3V3);
			\draw[-] (3V1) to[bend right=90,looseness=2] (3V2);
			\draw[-] (3V2) to[bend right=90,looseness=2] (3V3);
			\node at ($(\XX+0.65,0*\Y)$) {$H_2$};
			\draw[-] (3V2) to[bend left=90,looseness=3] (3V4);
			\draw[-] (3V2) to[bend left=90,looseness=2] (3V3);
			\draw[-] (3V3) to[bend left=90,looseness=2] (3V4);
	}}
	\hspace{2cm}
	{\tikz[baseline=.1ex]{
			\def\X{1}
			\def\XX{2.5}
			\def\Y{0.55}
			\def\BEND{30}
			\node[node, label=right:{\tiny $(x)$}] (V1) at ($(0,2*\Y)$) {};
			\node[node, label=right:{\tiny $(y)$}] (V2) at ($(0,1*\Y)$) {};
			\node[node, label=right:{\tiny $(z)$}] (V3) at ($(0,0*\Y)$) {};
			\node[node, label=right:{\tiny $(t)$}] (V4) at ($(0,-\Y)$) {};
			\draw[-latex,thick] (V1) -- node[left] {$a$} (V2);
			\draw[-latex,thick] (V2) -- node[left] {$b$} (V3);
			\draw[-latex,thick] (V3) -- node[left] {$c$} (V4);
			\node at ($({\XX/2},{0.5*\Y})$) {$\mapsto$};
			\node[node, label=right:{\tiny $(\sigma)$}] (3V1) at ($(\XX,1.5*\Y)$) {};
			\node[node, label=right:{\tiny $(\tau)$}] (3V2) at ($(\XX,0*\Y)$) {};
			\node[node] (3V3) at ($(\XX,-\Y)$) {};
			\draw[-latex,thick] (3V1) to[in=210,out=150,looseness=30] node[left] {$a$} (3V1);
			\draw[-latex,thick] (3V1) -- node[left] {$b$} (3V2);
			\draw[-latex,thick] (3V2) -- node[left] {$c$} (3V3);
	}}

	\caption{Scheme of parallel composition of $H_1$ and $H_2$ (left) and an example of substitution of a hypergraph according to $h$ where $h(x)=h(y)=\sigma$, $h(z)=\tau$, and $h(t)$ is undefined (right).}
	\label{figure:pc-sub}
\end{figure}

Other operations used in HR-algebras allow one to reassign selectors to external nodes, to make an external node non-external, or to fuse some external nodes. We shall use an operation that unifies all the three manipulations; we shall call it \emph{substitution}.
\begin{definition}\label{definition:sub}
	Given a hypergraph $H$ and a partial function $h \subseteq \Sigma \times \Sigma$, let $\sim$ be the smallest equivalence relation such that $\ext_H(\sigma_1) \sim \ext_H(\sigma_2)$ whenever $h(\sigma_1) =h(\sigma_2)$. Then $\sub_h(H)$ is the hypergraph $H^\prime$ such that $\type(H^\prime) = h(\type(H))$; $V_{H^\prime} = V_H/\sim$; $E_{H^\prime}=E_H$; $\att_{H^\prime}(e)(\sigma) = [\att_H(e)(\sigma)]_\sim$, $\lab_{H^\prime}(e)=\lab_H(e)$ for $e \in E$; $\ext_{H^\prime}(h(\sigma)) = [\ext_H(\sigma)]_\sim$.
\end{definition}

One can compare the substitution operation with the operations of source renaming and source fusion from \cite{Rozenberg97} and verify that the former one is interdefinable with the latter ones (in presence of parallel composition). Hence, one can use $\pc$ and $\sub_h$ as basic operations of HR-algebras. Nicely, exactly these operations can also be used for defining hypergraph language models, which we are going to do now. Let $K_0 \eqdef \langle \emptyset, \emptyset, \emptyset, \emptyset, \emptyset \rangle$ be the empty hypergraph. From now on, $\Sigma=\Var$ (i.e. selectors and variables are the same objects).
\begin{definition}\label{definition:model}
	A \emph{hypergraph language model} is a pair $\langle T, u \rangle$ where $T$ is a $\Sigma$-typed alphabet and $u:\Fm(\MILLFO) \to \mathcal{P}(\mathcal{H}(T))$ is a function mapping formulas of $\MILLFO$ to sets of abstract hypergraphs over $T$ which satisfies the following conditions:
	\begin{enumerate}
		\item\label{item:def-model-sub} $\sub_h(u(A)) \subseteq u(A[h])$ for any total function $h:\Sigma \to \Sigma$;
		\item $u(A \mconj B) = u(A) \pc u(B)$;
		\item $u(A \limpl B) = \{H \mid \forall H^\prime \in u(A)\, \left(H \pc H^\prime \in u(B) \right) \}$;
		\item $u(\exists x A) = \bigcup\limits_{y \in \Var} u(A[y/x])$;
		\item $u(\forall x A) = \bigcap\limits_{y \in \Var} u(A[y/x])$.
	\end{enumerate}
	A sequent $A_1,\ldots,A_n \yields B$ is true in this model if $u(A_1) \pc \ldots \pc u(A_n) \subseteq u(B)$ (for $n=0$, if $K_0 \in u(B)$).
\end{definition}
This semantics can be viewed as an instance of intuitionistic phase semantics \cite{KanovichOT06} with the commutative monoid $(\HG(T),\pc,K_0)$ and with the trivial closure operator $\mathrm{Cl}(X)=X$.

The first property of $u$ in Definition \ref{definition:model} relates substitution as a logical operation to substitution as a hypergraph transformation; without it, $u(A)$ and $u(A[h])$ would be unrelated, which is undesirable. Besides, this property is used to prove correctness. Note that, if $h:\Sigma \to \Sigma$ is a bijection, then $u(A[h]) = \sub_h(u(A))$ (apply property 1 twice). Quantifiers are interpreted as additive conjunction and disjunction, which reflects their behaviour correctly. 
\begin{lemma}\label{lemma:val-sub-monotone}
	\leavevmode
	\begin{enumerate}
		\item $u(A) \pc u(B) \subseteq u(C)$ if and only if $u(A) \subseteq u(B \limpl C)$.
		\item If $u(A) \subseteq u(B)$, then $u(A[h]) \subseteq u(B[h])$ for any substitution $h$.
	\end{enumerate}
\end{lemma}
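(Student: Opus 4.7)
The plan is to prove both parts by unpacking definitions, with the second part reduced to the first via the empty hypergraph $K_0$.

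For part 1, I would argue directly from the clause $u(B \limpl C) = \{H \mid \forall H' \in u(B)\, (H \pc H' \in u(C))\}$. The forward direction: given $u(A) \pc u(B) \subseteq u(C)$ and $H \in u(A)$, then for each $H' \in u(B)$ the composition $H \pc H'$ lies in $u(A) \pc u(B) \subseteq u(C)$, so $H \in u(B \limpl C)$. The backward direction: any element of $u(A) \pc u(B)$ has the form $H \pc H'$ with $H \in u(A) \subseteq u(B \limpl C)$ and $H' \in u(B)$, whence $H \pc H' \in u(C)$.

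For part 2, the key observation is that $K_0$ is a unit for $\pc$: since $K_0$ has no nodes, edges, or external selectors, Definition~\ref{definition:pc} immediately gives $K_0 \pc H = H$ for every $H$. Consequently $u(A) \subseteq u(B)$ is equivalent to $K_0 \in u(A \limpl B)$, because the latter unfolds to ``for every $H' \in u(A)$, $K_0 \pc H' = H' \in u(B)$''. (This is essentially the specialisation of part~1 to the case where the left-hand context is empty.)

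Now I would apply Property~\ref{item:def-model-sub} of Definition~\ref{definition:model} to the formula $A \limpl B$ with the substitution $h$ (extended to a total function on $\Sigma$ in any way, which does not affect either formula). Since $V_{K_0}$ and $\ran(\ext_{K_0})$ are empty, Definition~\ref{definition:sub} gives $\sub_h(K_0) = K_0$, so $K_0 \in \sub_h(u(A \limpl B)) \subseteq u((A \limpl B)[h])$. Using the standard syntactic identity $(A \limpl B)[h] = A[h] \limpl B[h]$ (after $\alpha$-renaming bound variables of $A,B$ away from $\ran(h)$), this reads $K_0 \in u(A[h] \limpl B[h])$, which by the equivalence established above is exactly $u(A[h]) \subseteq u(B[h])$. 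The only nontrivial point is checking that $K_0$ really is a unit for parallel composition and is fixed by every $\sub_h$; both are immediate from Definitions~\ref{definition:pc} and~\ref{definition:sub}, so I expect no real obstacle in this proof.
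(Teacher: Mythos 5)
Your proposal is correct and follows essentially the same route as the paper: part 1 is unpacked directly from the clause for $u(B \limpl C)$, and part 2 uses that $K_0$ is a unit for $\pc$ to reduce $u(A) \subseteq u(B)$ to $K_0 \in u(A \limpl B)$, then applies property~\ref{item:def-model-sub} of Definition~\ref{definition:model} together with $\sub_h(K_0) = K_0$. The extra care you take about extending $h$ to a total function and about $(A \limpl B)[h] = A[h] \limpl B[h]$ only fills in details the paper leaves implicit.
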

\begin{proof}
	\begin{enumerate}
		\item Trivially follows from the definition of a model.
		\item Clearly, $K_0 \pc H = H$ for any $H$. Therefore, $u(A) \subseteq u(B)$ implies $K_0 \in u(A \limpl B)$. Consequently, according to item \ref{item:def-model-sub} of Definition \ref{definition:model}, $K_0 = \sub_h(K_0) \in u(A[h] \limpl B[h])$. This implies that $u(A[h]) \subseteq u(B[h])$, as desired. \qedhere
	\end{enumerate}
\end{proof}

The main result is soundness of $\MILLFO$ and completeness of its fragment $\MILLFO(\limpl,\forall)$ w.r.t. hypergraph language models.
\begin{theorem}\label{theorem:completeness}
	\leavevmode
	\begin{enumerate}
		\item $\MILLFO$ is sound w.r.t. hypergraph language models.
		\item $\MILLFO(\limpl,\forall)$ is complete w.r.t. hypergraph language models.
	\end{enumerate}
\end{theorem}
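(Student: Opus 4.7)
For soundness, I would proceed by straightforward induction on the length of a derivation in $\MILLFO$, showing that each rule preserves truth in any hypergraph language model. The axiom $A \vdash A$ is immediate. The multiplicative cases $(\mconj L)$ and $(\mconj R)$ follow from $u(A \mconj B) = u(A) \pc u(B)$ together with the commutativity and associativity of $\pc$; $(\limpl L)$ and $(\limpl R)$ rest on Lemma~\ref{lemma:val-sub-monotone}(1). The rules $(\exists R)$ and $(\forall L)$ are witnessed directly by $u(\exists x A) = \bigcup_y u(A[y/x])$ and $u(\forall x A) = \bigcap_y u(A[y/x])$. For $(\exists L)$ and $(\forall R)$, whose eigenvariable $z$ is not free in the other components of the sequent, I would invoke Lemma~\ref{lemma:val-sub-monotone}(2): renaming $z$ to an arbitrary $y$ gives a corresponding inclusion of $u$-values while leaving the rest of the sequent unchanged, which combined with the induction hypothesis yields the desired inclusion.

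For completeness of the $\{\limpl,\forall\}$-fragment, my plan is to build a canonical model in the spirit of Buszkowski~\cite{Buszkowski82}. Take the typed alphabet $T$ to consist of all formulas of $\MILLFO(\limpl,\forall)$ with $\type(A) \eqdef \FVar(A)$. For each $H \in \HG(T)$ fix an injective map $s_H : V_H \to \Sigma$ whose image is chosen sufficiently fresh, and set $\Gamma_H \eqdef \{\lab_H(e)[s_H \circ \att_H(e)] \mid e \in E_H\}$. Define
$$u(A) \eqdef \{H \in \HG(T) \mid \FVar(A) \subseteq \type(H) \text{ and } \Gamma_H \vdash A[s_H \circ \ext_H] \text{ in } \MILLFO\}.$$
This is independent of the choice of $s_H$ up to renaming of variables. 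The canonical hypergraph $H_{\{A\}}$ consisting of a single hyperedge labeled $A$ attached identically to external nodes $\FVar(A)$ lies in $u(A)$ by the axiom $A \vdash A$, and an arbitrary multiset $\Gamma$ corresponds via $\pc$ to the canonical hypergraph $H_\Gamma$. I would then verify that $u$ satisfies the model conditions of Definition~\ref{definition:model}: the substitution closure $\sub_h(u(A)) \subseteq u(A[h])$ reduces to showing that $\sub_h$ induces a compatible substitution of variables on both $\Gamma_H$ and the target formula; the identity $u(A \limpl B) = \{H \mid \forall H' \in u(A), H \pc H' \in u(B)\}$ follows from the standard two-directional argument, using cut (admissible in $\MILLFO$) together with $(\limpl L)$ for $\subseteq$, and plugging $H' = H_{\{A\}}$ together with $(\limpl R)$ for $\supseteq$; finally, $u(\forall x A) = \bigcap_y u(A[y/x])$ is proved via $(\forall L)$ and cut for $\subseteq$, and via $(\forall R)$ applied with a freshly chosen eigenvariable for $\supseteq$. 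Completeness then drops out: if a sequent $A_1,\ldots,A_n \vdash B$ holds in every model, then in particular in the canonical one, so $H_{\{A_1,\ldots,A_n\}} \in u(B)$, which unfolds to derivability of the original sequent in $\MILLFO$.

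The most delicate step will be the verification of the substitution axiom, because $\sub_h$ may collapse distinct external nodes when $h$ is non-injective, and one must show that the induced identification of free variables in $\Gamma_H$ is compatible with $\MILLFO$-derivability; this should follow from a syntactic substitution lemma for $\MILLFO$ combined with admissible cut against reflexivity axioms. Closely related and also subtle is the direction $\supseteq$ of the $\forall$ clause: one has to arrange that the chosen fresh $z$ does not occur free anywhere in $\Gamma_H$ or in the substitution $s_H \circ \ext_H$, so that the eigenvariable condition of $(\forall R)$ is met — this is precisely the role of picking the image of $s_H$ sufficiently fresh in the canonical construction. A minor but unavoidable bureaucratic burden will be maintaining the alpha-renaming conventions that make the definition of $u$ and of $\Gamma_H$ well-defined at the level of abstract hypergraphs.
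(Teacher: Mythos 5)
Your soundness argument matches the paper's: a rule-by-rule check in which only $(\exists L)$ and $(\forall R)$ need care, handled via Lemma~\ref{lemma:val-sub-monotone}. Your completeness strategy is also the paper's in spirit (a Buszkowski-style canonical model whose elements record derivable sequents). However, the specific canonical model you define does not satisfy the model conditions of Definition~\ref{definition:model}, and the failure is not a bureaucratic matter of $\alpha$-renaming but a structural one caused by the clause $\FVar(A) \subseteq \type(H)$ in your definition of $u(A)$. Consider the universal clause $u(\forall x A) = \bigcap_{y \in \Var} u(A[y/x])$: if $x \in \FVar(A)$, then for each $y \notin \FVar(\forall x A)$ your condition forces every $H \in u(A[y/x])$ to satisfy $y \in \type(H)$, so the intersection over all of the countably many $y \in \Var$ requires $\type(H)$ to be infinite and is therefore empty, while $u(\forall x A)$ contains your canonical single-hyperedge hypergraph. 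Similarly, the implication clause fails: $K_0 \pc H' = H'$ for every $H'$, so the set $\{H \mid \forall H' \in u(A)\,(H \pc H' \in u(A))\}$ contains the empty hypergraph $K_0$, yet $K_0 \notin u(A \limpl A)$ under your definition as soon as $A$ has a free variable. So the structure you build is not a hypergraph language model, and the completeness argument cannot be run in it.

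The paper circumvents exactly this by two devices. First, the external nodes of the canonical hypergraph $H^{X;Y}_{\Gamma;A}$ are only $(V \cap \FVar(A)) \cup Y$ with $V = \FVar(\Gamma) \cup X$: a free variable of the succedent that does not occur as a node simply contributes nothing, which is what makes $H^{X;Y}_{\Gamma;\forall x A} = H^{X;Y}_{\Gamma;A[y/x]}$ for every $y \notin V$ and keeps the intersection over all $y$ nonempty (and makes $K_0 = H^{\emptyset;\emptyset}_{\emptyset;C}$ a member of $u(C)$ whenever $\vdash C$ is derivable). Second, hyperedge labels are the occurrence-indexed formulas $A^\circ$ of Definition~\ref{definition:formula-circ} rather than the formulas themselves; since $\sub_h$ fuses nodes but never changes labels, labelling by $A$ itself with $\type(A) = \FVar(A)$ would make $\sub_h(H)$ carry a label of the wrong type after a non-injective substitution identifies free variables, whereas $A^\circ$ delegates all variable information to the attachment function, so that node fusion is precisely substitution. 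You would need to incorporate both devices (and the auxiliary parameters $X$, $Y$) for the verification of Definition~\ref{definition:model} to go through; the derivability manipulations you sketch (cut, $(\limpl L)$, $(\limpl R)$, $(\forall L)$, $(\forall R)$ with fresh eigenvariables) are then the right ones.
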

\begin{proof}
	Soundness can be checked straightforwardly by showing that the conclusion of each rule with true premises is also true. The only nontrivial cases are the rules $(\exists L)$ and $(\forall R)$. Assume that $\Gamma, A[z/x] \vdash B$ where $z$ does not occur freely in $\Gamma,B$ is true in a model $\langle T, u \rangle$. Without loss of generality, let us assume that $\Gamma$ is empty (otherwise, we can move it to the succeedent using Lemma \ref{lemma:val-sub-monotone}). Under this assumption, we are given that $u(A[z/x]) \subseteq u(B)$. By Lemma \ref{lemma:val-sub-monotone}, for each $y \in \Var$, $u(A[y/x]) = u(A[z/x][y/z]) \subseteq u(B[y/z]) = u(B)$. This proves that $u(\exists x A) \subseteq u(B)$. The case $(\forall R)$ is dealt with similarly.
	
	Completeness of $\MILLFO(\limpl,\forall)$ is proved by constructing a canonical model $\langle \mathrm{T}, \mathrm{u} \rangle$. Before we do this, let us introduce a new notion used in the construction. 
	\begin{definition}\label{definition:formula-circ}
		Assume that a countable sequence of variables $\xi_1,\xi_2,\ldots$ is fixed. Given a formula $A$, let us read it from left to right and replace the $i$-th occurrence of a free variable from the left by $\xi_i$. We denote the resulting formula by $A^\circ$. 
	\end{definition}
	For example, $(\exists x (A(x,y,y)\mconj \forall z B(z,y,t)))^\circ = \exists x (A(x,\xi_1,\xi_2)\mconj \forall z B(z,\xi_3,\xi_4))$. We shall use formulas of the form $A^\circ$ as labels of hyperedges in the canonical model with $\type(A^\circ) = \FVar(A^\circ)$. So, $\mathrm{T} \eqdef \{A^\circ \mid A \in \Fm(\MILLFO(\limpl,\forall))\}$. The idea is that, in the canonical model, variables are represented by nodes, while hyperedge labels do not carry information about variables ($\xi_1,\xi_2,\ldots$ are, informally, placeholders for variables). 
	
	Given a sequent $\Gamma \vdash A$ where $\Gamma = A_1,\ldots,A_n$ ($n \ge 0$) and given two finite sets of variables $X,Y$ such that $Y \subseteq \FVar(\Gamma) \cup X$, let us define the hypergraph $H^{X;Y}_{\Gamma;A} = \langle V, E, \att,\lab,\ext\rangle$ as follows: $V = \FVar(\Gamma) \cup X$;  $E = \{e_1,\ldots,e_n\}$; $\att(e_i)(x) = h_i(x)$ for $x \in \FVar(A_i^\circ)$ where $h_i:\Sigma \to \Sigma$ is a function such that $A_i = A_i^\circ[h_i]$; $\lab(e_i) = A^\circ_i$; $\ext(x) = x$ for $x \in (V \cap \FVar(A)) \cup Y$ (and undefined otherwise).
	
	Let $\mathrm{u}(A) \eqdef \{H^{X;Y}_{\Gamma;A} \mid \Gamma \vdash A~\text{is derivable in $\MILLFO(\limpl,\forall)$}\}$. The proof that $\langle \mathrm{T},\mathrm{u} \rangle$ is a hypergraph language model is quite technical and it can be found in Appendix \ref{appendix:proof-theorem:completeness}.
	Now, assume that $A_1,\ldots,A_n \vdash B$ is true in this model. Then, $K_0 \in \mathrm{u}(A_1 \limpl (\ldots \limpl (A_n \limpl B)))$, i.e. $K_0 = H^{X;Y}_{\emptyset;A_1 \limpl (\ldots \limpl (A_n \limpl B))}$, and $\vdash A_1 \limpl (\ldots \limpl (A_n \limpl B))$ is derivable in $\MILLFO(\limpl,\forall)$. By invertibility of $(\limpl R)$, the sequent $A_1,\ldots,A_n \vdash B$ is derivable.
\end{proof}

\section{Conclusion}\label{section:conclusion}

As we have shown, first-order intuitionistic linear logic does have strong connections to the hypergraph grammar theory, namely, to hypergraph transformation systems and to HR-algrebras. The notion of hypergraph first-order categorial grammars naturally and simply extends the concept of Lambek categorial grammars to hypergraphs. Developing hypergraph $\MILLFO$ grammars and relating them to hypergraph transformation systems gave us useful insights into expressive power of string $\MILLFO$ grammars. In turn, the notion of a hypergraph language model revealed a previously unknown connection of first-order intuitionistic linear logic to the apparatus of HR-algebras, the latter having been studied mainly in the context of monadic second-order definability.

Several questions remain open for the future work. The first one is whether the converse to Theorem \ref{theorem:MILL1G>LTHTS} holds; more generally, it is desirable to characterise precisely hypergraph $\MILLFO$ grammars in terms of hypergraph transformation systems. The second one is whether string $\MILLFO$ grammars over the restricted fragment where each variable is allowed to be used in a formula at most twice generate only nonsemilinear languages (I explained earlier why I think that the answer to this question is negative). The third one is whether $\MILLFO$ is complete w.r.t. hypergraph language models.

\bibliography{hl_fo-logic_Final}

\appendix

\newpage

\section{Proofs}

\subsection{Proof of Proposition \ref{proposition:string-hypergraph-MILL1}}\label{appendix:proof-proposition:string-hypergraph-MILL1}

Let $\Gram=\langle T, S, \triangleright \rangle$ be a string $\MILLFO$ grammar. To make a hypergraph $\MILLFO$ grammar from it, we need to assign some formula to the node symbol. If there was the unit in our calculus, we could just assign it to nodes, but we do not have it. Let us fix a predicate symbol $q$ of arity $0$ not occuring in formulae of $\Gram$. We define the hypergraph $\MILLFO$ grammar $\Gram^\prime = \langle T, S^\prime, \{\lt,\rt\},\triangleright^\prime \rangle$ as follows:
\begin{itemize}
	\item $S^\prime \eqdef S \mconj (q \limpl q)$;
	\item for $a \in T$, $a \triangleright^\prime A$ iff $a \triangleright A$;
	\item $\bullet \triangleright (q \limpl q)$.
\end{itemize}
A string graph $\SG(a_1\ldots a_n)$ is accepted by $\Gram^\prime$ if and only if there is a function $h_E:\{e_1,\ldots,e_n\} \to \Fm(\MILLFO)$ such that $a_i \triangleright h_E(e_i)$ and the sequent
$$
\{h_E(e_i)[x_{i-1}/\lt,x_i/\rt] \mid i=1,\ldots,n\}, (q \limpl q)^{n+1} \vdash S \mconj (q \limpl q)
$$ 
is derivable in $\MILLFO$. By Lemma \ref{lemma:splitting}, this is equivalent to derivability of $\{h_E(e_i)[x_{i-1}/\lt,x_i/\rt] \mid i=1,\ldots,n\} \vdash S $ and to that of $(q \limpl q)^{n+1} \vdash q \limpl q$. The latter sequent is derivable for each $n$, so this is equivalent to the fact that $a_1\ldots a_n$ belongs to $L(\Gram)$.

The other way around, let $\Gram = \langle T, S, \{\lt,\rt\}, \triangleright \rangle$ be a hypergraph $\MILLFO$ grammar. We fix a unary predicate $\mu(x)$ not occurring in $\Gram$ and define the string $\MILLFO$ grammar $\Gram^\prime = \langle T, S^\prime, \triangleright^\prime \rangle$ as follows.
\begin{itemize}
	\item $S^\prime = \mu(\lt) \mconj S$;
	\item $a \triangleright^\prime A$ iff $\type(a) = \{\lt,\rt\}$ and either $A = B \mconj C[\rt/x_\bullet]$ or $A = B \mconj \mu(\lt) \mconj C_1[\lt/x_\bullet]\mconj C_2[\rt/x_\bullet]$ where $a \triangleright B$ and $\bullet \triangleright C$, $\bullet \triangleright C_1$, $\bullet \triangleright C_2$. We call a formula of the form $B \mconj \mu(\lt) \mconj C_1[\lt/x_\bullet]\mconj C_2[\rt/x_\bullet]$ \emph{border} and a formula of the form $B \mconj C[\rt/x_\bullet]$ \emph{non-border}.
\end{itemize}
The grammar $\Gram^\prime$ does not accept the empty string because the sequent $\vdash \mu(x_0) \mconj S[x_0/\lt,x_0/\rt]$ is not derivable in $\MILLFO$. Now, consider a string $w = a_1 \ldots a_n$ for $n>0$. It belongs to $L(\Gram^\prime)$ if and only if there are $A_1,\ldots,A_n$ such that $a_i \triangleright^\prime A_i$ and the sequent $A_1[x_0/\lt,x_1/\rt],\ldots,A_n[x_{n-1}/\lt,x_n/\rt] \vdash \mu(x_0) \mconj S[x_0/\lt,x_n/\rt]$ is derivable in $\MILLFO$. If the latter is the case, then, clearly, $A_1$ is a border formula and, for $i>1$, $A_i$ is a non-border formula. Now, it is not hard to see that this is equivalent to the fact that $\SG(w)$ belongs to $L(\Gram)$. 

\subsection{Proof of Lemma \ref{lemma:main}}\label{appendix:proof-lemma:main}

To analyse derivability of the sequent of interest we shall use Andreoli's triadic calculus $\Sigma_3$ \cite{Andreoli92}. Note that the logic $\ILLFO$ we work with is intuitionistic, while $\Sigma_3$ is developed for classical linear logic. However, a well-known result from \cite{Shellinx91} states that the fragments of (first-order) classical linear logic in the language of $\ILLFO$ that do not include the constant $\mathbf{0}$ are conservative over $\ILLFO$. Thus we can translate intuitionistic formulas into classical ones (in particular, we translate the implication $A \limpl B$ into $A^\bot \parr B$) and analyse the latter.

We use the exact same notation as in \cite{Andreoli92} with the only difference that we switch positive and negative atoms. We do not introduce focusing here in detail but refer the reader to \cite{Andreoli92}. Let us, nevertheless, present the axiom and the inference rules which shall occur in the proof.
$$
\infer[(I_1)]{\Theta : X^\bot \Downarrow X}{}
$$
$$
\infer[(\forall)]{\Theta : \Gamma \Uparrow L, \forall x A}{\Theta : \Gamma \Uparrow L, A[z/x]}
\qquad
\infer[(\exists)]{\Theta : \Gamma \Downarrow \exists x A}{\Theta : \Gamma \Downarrow A[y/x]}
$$
$$
\infer[(\mdisj)]{\Theta : \Gamma \Uparrow L, A \mdisj B}{\Theta : \Gamma \Uparrow L, A,B}
\qquad
\infer[(\mconj)]{\Theta : \Gamma, \Delta \Downarrow A \mconj B}
{\Theta : \Gamma \Downarrow A & \Theta : \Delta \Downarrow B}
$$
$$
\infer[(R\Uparrow)]{\Theta : \Gamma \Uparrow L,C}{\Theta : \Gamma, C \Uparrow L}
\qquad
\infer[(R\Downarrow)]{\Theta : \Gamma \Downarrow D}{\Theta : \Gamma \Uparrow D}
\qquad
\infer[(D_1)]{\Theta : \Gamma, E \Uparrow}{\Theta : \Gamma \Downarrow E}
\qquad
\infer[(D_2)]{\Theta,E : \Gamma \Uparrow}{\Theta,E : \Gamma \Downarrow E}
$$
In the above rules, $X$ is a positive atom; $C$'s outermost connective is not $\mdisj,\forall$; $D$'s outermost connective is not $\mconj,\exists$ and $D$ is not a positive atom; $E$ is not a negative atom. (Recall that positive atoms are formulae of the form $p(x_1,\ldots,x_n)$ and negative atoms are formulae of the form $p^\bot(x_1,\ldots,x_n)$.)

\begin{lemma}\label{lemma:main-appendix}
	\leavevmode
	Let 
	\begin{itemize}
		\item $P,P^\prime$ be multisets of ht-rules; 
		\item $\Theta$ be a multiset of $\LLFO$ formulas and $\Xi$ be a multiset consisting of negative atoms such that, for each $x \in \Var$, $\nu^\bot(x)$ occurs at most once in $\Theta$ and at most once in $\Xi$; 
		\item $h:\ext_G \to \Var$ be a function. 
	\end{itemize}
	\begin{enumerate}
		\item The sequent 
		\begin{equation}\label{equation:sequent-fm-1}
			\vdash \{\fm^\bot(r) \mid r \in P\} : \{\fm^\bot(r) \mid r \in P^\prime\}, \Xi, D(G)[h] \Uparrow
		\end{equation}
		is derivable in $\Sigma_3$ if and only if $h$ is injective, $\Xi=\diag^\bot(G^\prime)$ for some $G^\prime$ such that $\type(G)=\type(G^\prime)$, $h(\ext_G(\sigma)) = \ext_{G^\prime}(\sigma)$ for $\sigma \in \type(G)$ and such that there is a derivation of a hypergraph isomorphic to $G^\prime$ from $G$ which uses each rule from $P^\prime$ exactly once and that can use rules from $P$ any number of times.
		\item The sequent 
		\begin{equation}\label{equation:sequent-fm-2}
			\vdash \{\fm^\bot(r) \mid r \in P\} : \Theta \Downarrow D(G)[h]
		\end{equation}
		is derivable in $\Sigma_3$ if and only if $h$ is injective and $\Theta=\diag^\bot(G^\prime)$ for some $G^\prime$ isomorphic to $G$ such that $h(\ext_G(\sigma)) = \ext_{G^\prime}(\sigma)$ for $\sigma \in \type(G)$.
	\end{enumerate}
\end{lemma}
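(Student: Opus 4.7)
The plan is to prove both parts simultaneously by induction, exploiting the rigid structure Andreoli's focused calculus $\Sigma_3$ imposes on proof search. Every formula appearing in the sequents of interest is either $\exists/\mconj$-headed (positive), $\forall/\mdisj$-headed (negative), or a (negative) atom, and $\fm^\bot(r) = \exists \vec{u}(D(H^\prime) \mconj D(H)[\chi_r]^\bot)$ for $r=(H\to H^\prime)$ has a fixed shape. Consequently, the Uparrow phase can only move formulas to $\Gamma$ via $(R\Uparrow)$ or unfold $\forall/\mdisj$ layers via $(\forall)/(\mdisj)$, and each focused Downarrow phase unfolds in a uniquely determined pattern of $(\exists)$ followed by $(\mconj)$. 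The only nondeterminism is the choice of focused formula at each $(D_1)/(D_2)$ step, which I expect to correspond exactly to the choice of the next rule in a rewrite derivation.

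For Part 2, focusing on $D(G)[h] = \exists \vec{v} \bigotimes \diag(G)[h]$ forces $(\exists)$ with some witnesses $\vec{w}/\vec{v}$ and then $(\mconj)$-splitting $\Theta$ into singletons, each closed by $(I_1)$ against an atom of $\diag(G)[h,\vec{w}/\vec{v}]$. Since $(I_1)$ matches atoms literally, $\Theta$ must equal $\diag^\bot(G^\prime)$ where $G^\prime$ is the hypergraph whose nodes are the images of $V_G$ under the combined substitution; $h$ must be injective and $\vec{w}$ distinct and fresh, for otherwise two distinct atoms of $\diag(G)$ would collapse into a single atom of $\Theta$, contradicting $\Theta$ being the exact dual diagram of a hypergraph isomorphic to $G$. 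This simultaneously proves both directions of Part 2 and provides the matching step for Part 1.

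For Part 1, after $(R\Uparrow)$ exhausts the list, focus must fall on $D(G)[h]$, on some $\fm^\bot(r^\prime)$ with $r^\prime\in P^\prime$ via $(D_1)$, or on some $\fm^\bot(r)$ with $r\in P$ via $(D_2)$. Focusing on $D(G)[h]$ reduces to a Part 2 instance; because $\diag^\bot(G^\prime)$ contains only negative atoms whereas the linear residue still contains the positive formulas $\fm^\bot(r^\prime)$, this case forces $P^\prime=\emptyset$ and corresponds to the empty rewrite derivation. Focusing on $\fm^\bot(r^\prime)$ with $r^\prime=(H\to H^\prime)$ fires $(\exists)$ with witnesses $\vec{u}^\prime$ and then $(\mconj)$ splits the context into two branches: the left is a Part 2 instance pinning a copy of $\diag^\bot(H^\prime)$ (under $\vec{u}^\prime/\vec{u}$) inside $\Xi$, and the right, after $(R\Downarrow)$ and asynchronous decomposition of $\forall \vec{v}\,\bigmdisj \diag^\bot(H)[\chi_{r^\prime}][\vec{u}^\prime/\vec{u}]$ introducing fresh witnesses $\vec{z}$, produces a new Uparrow sequent whose $\Xi$ is the original one with the slice $\diag^\bot(H^\prime)$ removed and $\diag^\bot(H[\chi_{r^\prime}][\vec{u}^\prime/\vec{u}][\vec{z}/\vec{v}])$ added. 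This is exactly the diagram-level effect of undoing one $r^\prime$-step. Invoking the Part 1 induction hypothesis on $P^\prime\setminus\{r^\prime\}$ yields $G\Rightarrow^\ast G_{\mathrm{prev}}$, and the reconstructed $r^\prime$-step gives $G\Rightarrow^\ast G^\prime$. The $(D_2)$ case is identical except that $r$ remains available in $\Theta$, matching unrestricted reuse of rules from $P$. The converse direction builds a $\Sigma_3$ proof from a rewrite derivation by the same case split, inducting on the length of the derivation.

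The main obstacle will be variable bookkeeping rather than any conceptual difficulty. The substitutions $h$, $\chi_{r^\prime}$, $\vec{u}^\prime/\vec{u}$, and $\vec{z}/\vec{v}$ all interact with node-identification in the hypergraph picture, and one must verify that an $(\exists)$-instantiated witness aligns with a subhypergraph's attachment precisely when the corresponding rewrite step applies at that match. The injectivity hypotheses in Definition \ref{definition:ht-rule-system} — injective $\ext_H$, $\ext_{H^\prime}$, and $\att_K(e)$ at the match site — are essential here: they ensure that $\chi_{r^\prime}$ is well-defined, that the variable-to-node correspondence remains bijective throughout the induction, and that atoms in diagrams never illegally coincide under $(I_1)$. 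Making these bookkeeping steps precise — carefully distinguishing substitution on formulas, renaming of bound variables, and node identification in hypergraphs — is what makes the proof technically laborious.
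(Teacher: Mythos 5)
Your proposal is correct and follows essentially the same route as the paper's proof: a joint induction on both parts, driven by proof search in $\Sigma_3$, with the case split on the focused formula ($D(G)[h]$ forcing $P^\prime=\emptyset$ via Part~2; $\fm^\bot(r)$ via $(D_1)$ or $(D_2)$ unfolding as $(\exists)^\ast$ then $(\mconj)$, the left branch handled by Part~2 and the right branch yielding the ``undone'' rewrite step for the induction hypothesis), and the $\nu$-atoms enforcing injectivity. The only detail worth making explicit when you write it up is that the $(\mconj)$ context split is itself nondeterministic, so you must rule out the split placing $D(G)[h]$ in the branch focused on $D(H_2)$ (it fails because Part~2 forces that branch's linear context to consist of negative atoms only), and that the injectivity of the combined witness substitution comes from the lemma's hypothesis that each $\nu^\bot(x)$ occurs at most once in $\Theta$ and $\Xi$.
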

\begin{proof}
	Let us denote $\{\fm^\bot(r) \mid r \in P\}$ by $\Phi$ and $\{\fm^\bot(r) \mid r \in P^\prime\}$ by $\Phi^\prime$.
	
	Both statements of the lemma are proved jointly by induction on the length of a derivation. Let us prove their ``only if'' parts. Essentially, we need to do a proof search. Let us start with considering possible cases of the last rule applied in a derivation of (\ref{equation:sequent-fm-1}).
	
	\textit{Case 1.}
	$$
	\infer[(D_1)]
	{
		\vdash \Phi : \Phi^\prime, \Xi, D(G)[h] \Uparrow
	}
	{
		\vdash \Phi : \Phi^\prime, \Xi \Downarrow D(G)[h]
	}
	$$
	
	The premise is a sequent of the form (\ref{equation:sequent-fm-2}). The induction hypothesis completes the proof.
	
	\textit{Case 2.}
	$$
	\infer[(D_1)]
	{	\vdash \Phi : \{\fm^\bot(r) \mid r \in P^\prime\}, \Xi, D(G)[h] \Uparrow	}
	{	\vdash \Phi : \{\fm^\bot(r) \mid r \in P^{\prime\prime}\}, \Xi, D(G)[h] \Downarrow \fm^\bot(p)		}
	$$
	Here $p = (H_1 \to H_2) \in P^\prime$ and $P^{\prime\prime} = P^\prime \setminus \{p\}$. Let us denote $\{\fm^\bot(r) \mid r \in P^{\prime\prime}\}$ by $\Phi^{\prime\prime}$. It holds that $\fm^\bot(p) = \exists \vec{u} (D(H_2) \mconj D^\bot(H_1)[\chi_p])$ where $\vec{u}$ is the list of nodes in $\ran(\ext_{H_2})$; therefore, the last steps in the derivation of the above premise must be applications of $(\exists)$:
	$$
	\infer[(\exists)^\ast]
	{	\vdash \Phi : \Phi^{\prime\prime}, \Xi, D(G)[h] \Downarrow \fm^\bot(p)		}
	{	\vdash \Phi : \Phi^{\prime\prime}, \Xi, D(G)[h] \Downarrow (D(H_2) \mconj D^\bot(H_1)[\chi_p])[f]	}
	$$
	(Here $(\exists)^\ast$ means that $(\exists)$ is applied several times.) The function $f$ maps $\ran(\ext_{H_2})$ to $\Var$. Now, there are two options how the premise of the above rule application can be obtained.
	
	\textit{Case 2a.}
	$$
	\infer[(\mconj)]
	{	\vdash \Phi : \Phi^{\prime\prime}, \Xi, D(G)[h] \Downarrow (D(H_2) \mconj D^\bot(H_1)[\chi_p])[f]	}
	{	\vdash \Phi : \Phi^{\prime\prime}_1, \Xi_1, D(G)[h] \Downarrow D(H_2)[f] 
		&
		\vdash \Phi : \Phi^{\prime\prime}_2, \Xi_2 \Downarrow D^\bot(H_1)[\chi_p])[f]	}
	$$
	Here $\Phi_1^{\prime\prime},\Phi_2^{\prime\prime}=\Phi^{\prime\prime}$ and $\Xi_1,\Xi_2=\Xi$. By the induction hypothesis applied to the leftmost premise (which is of the form (\ref{equation:sequent-fm-2})), it must be the case that $\Xi_1, D(G)[h]$ consists of negative atoms. However, $D(G)[h]$ is not an atom, so this case is impossible. 
	
	\textit{Case 2b.}
	$$
	\infer[(\mconj)]
	{	\vdash \Phi : \Phi^{\prime\prime}, \Xi, D(G)[h] \Downarrow (D(H_2) \mconj D^\bot(H_1)[\chi_p])[f]	}
	{	\vdash \Phi : \Phi^{\prime\prime}_1, \Xi_1 \Downarrow D(H_2)[f] 		&		\vdash \Phi : \Phi^{\prime\prime}_2, \Xi_2, D(G)[h] \Downarrow D^\bot(H_1)[\chi_p][f]	}
	$$
	By the induction hypothesis, $f$ is injective, $\Phi_1^{\prime\prime}=\emptyset$ and $\Xi_1 = \diag^\bot(H_2^\prime)$ where $H_2^\prime$ is isomorphic to $H_2$ and $f(\ext_{H_2}(\sigma)) = \ext_{H_2^\prime}(\sigma)$ for $\sigma \in \type(H_2)$. Consequently, $\Phi_2^{\prime\prime} = \Phi^{\prime\prime}$.
	
	Now, let us analyse possible derivations of the rightmost premise. Let $f^\prime \eqdef f \circ \chi_p$. The formula $D^\bot(H_1)[f^\prime]$ equals $\forall \vec{v} \bigmdisj \diag^\bot(H_1)[f^\prime]$ where $\vec{v}$ is the list of nodes in $V_{H_1} \setminus \ran(\ext_{H_1})$. Any derivation of the rightmost premise must end in the following way:
	$$
	\infer[(R \Downarrow)]
	{	
		\vdash \Phi : \Phi^{\prime\prime}, \Xi_2, D(G)[h] \Downarrow D^\bot(H_1)[f^\prime]
	}
	{
		\infer[(\forall)^\ast]
		{	
			\vdash \Phi : \Phi^{\prime\prime}, \Xi_2, D(G)[h] \Uparrow D^\bot(H_1)[f^\prime]
		}
		{
			\infer[(\mdisj)^\ast]
			{	
				\vdash \Phi : \Phi^{\prime\prime}, \Xi_2, D(G)[h] \Uparrow \bigmdisj \diag^\bot(H_1)[f^\prime][g]
			}
			{
				\infer[(R \Uparrow)^\ast]
				{	
					\vdash \Phi : \Phi^{\prime\prime}, \Xi_2, D(G)[h] \Uparrow \diag^\bot(H_1)[f^\prime][g]
				}
				{
					\vdash \Phi : \Phi^{\prime\prime}, \Xi_2, \diag^\bot(H_1)[f^\prime][g], D(G)[h] \Uparrow 
				}
			}
		}
	}
	$$
	Here $g:V_{H_1} \setminus \ran(\ext_{H_1}) \to \Var$ is a substitution such that, for each $v \in V_{H_1} \setminus \ran(\ext_{H_1})$, $g(v)$ is a fresh variable, i.e. one not occuring in $\Xi_2,D(G)[h],\diag^\bot(H_1)[f^\prime]$ (for $v \in \dom(g)$). 
	
	Let $\xi$ be a variable. We shall prove now that $\nu(\xi)$ cannot occur twice in $\Xi_2,\diag^\bot(H_1)[f^\prime][g]$. The formula $\nu^\bot(\xi)$ cannot occur in $\Xi_2$ twice because it occurs in $\Xi$ at most once. It cannot occur in $\diag^\bot(H_1)[f^\prime][g]$ twice as well. Indeed, no formula of the form $\nu^\bot(x)$ occurs twice in $\diag^\bot(H_1)$ (see Definition \ref{definition:diagram}). The functions $f$ and $\chi_p$ are injective, so the same property holds for $\diag^\bot(H_1)[f^\prime]$. Finally, the function $g$ is also injective because it replaces some variables in $\diag^\bot(H_1)[f^\prime]$ by fresh ones. Therefore, the desired property holds for the multiset $\diag^\bot(H_1)[f^\prime][g]$ as well.
	
	Finally, assume, ex falso, that $\nu^\bot(\xi)$ occurs once in $\Xi_2$ and once in $\diag^\bot(H_1)[f^\prime][g]$. It is not the case that $\xi \in \ran(g)$ because $g$ introduces only fresh variables. Therefore, $\xi = f^\prime(v)=f(\chi_p(v))$ for $v = \ext_{H_1}(\sigma)$, $\sigma \in \type(H_1)$. Consequently, $\xi = f(\ext_{H_2}(\sigma))=\ext_{H_2^\prime}(\sigma)$. However, $\nu^\bot(\ext_{H_2^\prime}(\sigma))$ belongs to $\Xi_1 = \diag^\bot(H_2^\prime)$, so it occurs in $\Xi=\Xi_1,\Xi_2$ twice. This is a contradiction.
	
	Summing up, one can apply the induction hypothesis to the sequent 
	$$
	\vdash \Phi : \Phi^{\prime\prime}, \Xi_2, \diag^\bot(H_1)[f^\prime][g], D(G)[h] \Uparrow$$
	and conclude that
	\begin{itemize}
		\item $h$ is injective;
		\item $\Xi_2,\diag^\bot(H_1)[f^\prime][g] = \diag^\bot(H^\prime)$ for a hypergraph $H^\prime$ such that $\type(H^\prime)=\type(G)$, $h(\ext_G(\sigma))=\ext_{H^\prime}(\sigma)$ for $\sigma \in \type(G)$;
		\item there is a derivation of a hypergraph isomorphic to $H^\prime$ from $G$ that uses each rule from $P^{\prime\prime}$ exactly once and that can use rules from $P$ arbitrary many times.
	\end{itemize}
	Since $g \circ f^\prime$ is injective, it holds that $\diag^\bot(H_1)[f^\prime][g]=\diag^\bot(H_1^\prime)$ where $H_1^\prime$ is isomorphic to $H_1$. Let $K$ be a hypergraph obtained from $H^\prime$ by removing nodes belonging to $\ran(g)$, removing hyperedges belonging to $E_{H^\prime_1}$, and adding a new hyperedge $e_0$ such that $\type_K(e_0) = \type(H_1)$ and $\att_K(e_0)(\sigma)=\ext_{H^\prime_1}(\sigma)$. Clearly, $K[e_0/H_1]$ is isomorphic to $H^\prime$. 
	
	The multiset $\Xi_2$ equals
	$$
	\Xi_2 = \{\lab_{K}(e)[\att_K(e)] \mid e \in E_K \setminus \{e_0\}\} \cup \{\nu(v) \mid v \in V_K \setminus \ran(\att_K(e_0))\}.
	$$
	Earlier, we proved that $\Xi_1 = \diag^\bot(H_2^\prime)$ for $H_2^\prime$ being isomorphic to $H_2$. For $\sigma \in \type(H_2^\prime)$, it holds that $\ext_{H_2^\prime}(\sigma) = f(\ext_{H_2}(\sigma)) = f(\chi_p(\ext_{H_1}(\sigma))) = f^\prime(\ext_{H_1}(\sigma)) = \ext_{H_1^\prime}(\sigma)$. 
	
	Assume that $v \in V_{H_2^\prime} \setminus \ran(\ext_{H_2^\prime})$. Then, $\nu(v)$ belongs to $\Xi_1$. If $v$ appears as a free variable in $\Xi_2$, then, since $v\notin\ran(\ext_{H_2^\prime})=\ran(\att_K(e_0))$, there must be the formula $\nu(v)$ in $\Xi_2$. But this would imply that $\nu(v)$ occurs twice in $\Xi$, which is a contradiction. Therefore, $v$ does not appear in $\Xi_2$. All this implies that $\Xi_1,\Xi_2 = \diag^\bot(G^\prime)$ where $G^\prime$ is isomorphic to $K[e_0/H_2^\prime]=K[e_0/H_2]$. Summing up, $G^\prime$ is derivable from $G$ using rules from $P$, using exactly once rules from $P^{\prime\prime}$ and the rule $p=(H_1 \to H_2)$.
	
	The only thing remaining to be proved is that $h(\ext_G(\sigma)) = \ext_{G^\prime}(\sigma)$. We know that $h(\ext_G(\sigma)) = \ext_{H^\prime}(\sigma)$. The procedure described above that transforms $H^\prime$ into $G^\prime$ does not affect external nodes. Therefore, $\ext_{H^\prime}(\sigma)=\ext_{G^\prime}(\sigma)$, as desired.
 	
	\textit{Case 3.}
	$$
	\infer[(D_1)]
	{	\vdash \Phi : \Phi^\prime, \Xi, D(G)[h] \Uparrow	}
	{	\vdash \Phi : \Phi^\prime, \Xi, D(G)[h] \Downarrow \fm^\bot(p)		}
	$$
	Here $\fm^\bot(p) \in \Phi$. This case is analogous to Case 2.
	
	Now, let us proceed to the second statement of the Lemma. Assume that the sequent $\vdash \{\fm^\bot(r) \mid r \in P\} : \Theta \Downarrow D(G)[h]$ is derivable. The formula $D(G)[h]$ is of the form $\exists \vec{v} \bigotimes \diag(G)[h]$; thus, any derivation of the sequent must end as follows:
	$$
	\infer[(\exists)^\ast]
	{
		\vdash \{\fm^\bot(r) \mid r \in P\} : \Theta \Downarrow D(G)[h]
	}
	{
		\vdash \{\fm^\bot(r) \mid r \in P\} : \Theta \Downarrow \bigotimes \diag(G)[h][g]
	}
	$$
	Here $g$ maps nodes from $V_G \setminus \ran(\ext_G)$ to variables. The premise can only be obtained by consecutive applications of $(\mconj)$. Going in this proof from bottom to top, one finally reaches sequents of the form $\vdash \{\fm^\bot(r) \mid r \in P\} : \tilde{\Theta} \Downarrow X$ where $\tilde{\Theta}$ is a subset of $\Theta$ and $X$ is an atom from $\diag(G)[h][g]$. Such a sequent can only be an axiom, so $\tilde{\Theta}=X^\bot$. Going back from top to bottom, we conclude that $\Theta=\diag^\bot(G)[h][g]$.
	
	The functions $h$ and $g$ have disjoint domains such that $\dom(h)\cup\dom(g)=V_G$. Let us unify these functions into a single one $f$. If $f$ is not injective, e.g. $f(v_1)=f(v_2)=x$, then $\Theta$ contains $\nu(x)$ twice, which is a contradiction. Thus, $f$ is injective. This implies that $h$ is injective and that $\Theta=\diag^\bot(G^\prime)$ where $G^\prime$ is isomorphic to $G$ via the node isomorphism $f$; besides, $h(\ext_G(\sigma))=\ext_{G^\prime}(\sigma)$. 
	
	The above reasonings, in fact, also prove the ``if'' parts of Lemma \ref{lemma:main-appendix}, because they show how to construct derivations of sequents of interest.
\end{proof}

Lemma \ref{lemma:main} follows directly from Lemma \ref{lemma:main-appendix}. Indeed, the sequent 
$$
\{\bang \fm(r) \mid r \in P \}, \{\fm(r) \mid r \in P^\prime \}, \diag(G^\prime) \vdash D(G)[\chi_{G \to G^\prime}]
$$ 
is derivable in $\ILLFO$ if and only if its translation into $\LLFO$
$$
\vdash \{\fm^\bot(r) \mid r \in P \}: \{\fm^\bot(r) \mid r \in P^\prime \}, \diag^\bot(G^\prime) , D(G)[\chi_{G \to G^\prime}] \Uparrow
$$
is derivable in $\Sigma_3$ \cite[Theorems 1, 2]{Andreoli92}. This, according to Lemma \ref{lemma:main-appendix}, is equivalent to the fact that a hypergraph isomorphic to $G^\prime$ is derivable from $G$ using each rule from $P^\prime$ once and using some rules from $P$.

\subsection{Alternative Proof of Theorem \ref{theorem:MILL1G-NPC}}\label{appendix:proof-theorem:MILL1G-NPC}

	In view of Corollary \ref{corollary:string-MILL1-grammars}, it suffices to present a linear-time ht-system generating an NP-complete language of string graphs. Let $P,Q_1,Q_2,S,T_1,T_2,U$ be nonterminal symbols such that $\type(Q_1)=\type(Q_2)=\emptyset$, $\type(P) = \type(S)=\type(T_1)=\type(U)=\{\lt,\rt\}$ and $\type(T_2)=\{1,2,3,4\}$; let $0,1,a,b,c$ be terminal symbols with type $\{\lt,\rt\}$. Let the start hypergraph be $S^\bullet+Q_1^\bullet$ (`$+$' is introduced in Definition \ref{definition:disjoint-union}). The rules are presented below.
	\begin{enumerate}
		\item $S^\bullet+Q_1^\bullet \to \SG(SaT_1aT_1aT_1)+Q_1^\bullet$;
		\item $T_1^\bullet+Q_1^\bullet \to \SG(kT_1)+Q_1^\bullet$, \; $T_1^\bullet+Q_1^\bullet \to \SG(k)+Q_1^\bullet$ for $k=0,1$;
		\item 
		$
		S^\bullet+Q_1^\bullet \to 
		\vcenter{\hbox{{\tikz[baseline=.1ex]{
						\def\HOR{1}
						\def\VER{1}
						\def\BEND{30}
						\foreach \l in {1,2,3}{
							\node[hyperedge] (E\l) at ($({\VER*(2*\l+0.5)},\HOR*0.5)$) {$T_2$};
						}
						\node[hyperedge] (E4) at ($({\VER*8},\HOR*0.5)$) {$Q_1$};
						\foreach \i in {0,...,7}
						{
							\node[node, label=\ifnumequal{\i}{0}{left}{right}:{\tiny \ifnumequal{\i}{0}{$(\lt)$}{\ifnumequal{\i}{7}{$(\rt)$}{}}}] (V\i0) at ($(\VER*\i,0)$) {};
							\ifnumgreater{\i}{1}{
								\node[node] (V\i1) at ($(\VER*\i,\HOR)$) {};	
							}{}
						}
						\foreach \k in {1,2,3}{
							\pgfmathtruncatemacro\L{2*\k}
							\pgfmathtruncatemacro\R{2*\k+1}
							\draw[thick,-latex] (V\L1) -- node[above] {$P$} (V\R1);
							\draw[-] (E\k) -- node[right] {\tiny $1$} (V\L1);
							\draw[-] (E\k) -- node[below] {\tiny $2$} (V\R1);
							\draw[-] (E\k) -- node[above] {\tiny $3$} (V\L0);
							\draw[-] (E\k) -- node[left] {\tiny $4$} (V\R0);
						}
						\foreach \k in {1,2,3}{
							\pgfmathtruncatemacro\L{2*\k-1}
							\pgfmathtruncatemacro\R{2*\k}
							\draw[thick,-latex] (V\L0) -- node[below] {$a$} (V\R0);
						}
						\draw[thick,-latex] (V00) -- node[below] {$S$} (V10);
		}}}};
		$
		\item
		$
		T_2^\bullet + Q_1^\bullet \to 
		\vcenter{\hbox{{\tikz[baseline=.1ex]{
						\def\HOR{1}
						\def\VER{1}
						\def\BEND{30}
						\node[hyperedge] (E1) at ($({\VER*1.5},\HOR*0.5)$) {$T_2$};
						\node[hyperedge] (E2) at ($({\VER*3},\HOR*0.5)$) {$Q_1$};
						\foreach \i in {0,...,2}
						{
							\foreach \j in {0,1}{
								\pgfmathtruncatemacro\NUM{3-(2*\j)+(\i/2)}
								\node[node, label=\ifnumequal{\i}{0}{left}{right}:{\tiny \ifnumequal{\i}{0}{$(\NUM)$}{\ifnumequal{\i}{2}{$(\NUM)$}{}}}] (V\i\j) at ($(\VER*\i,\HOR*\j)$) {};
							}
						}
						\draw[-] (E1) -- node[right] {\tiny $1$} (V11);
						\draw[-] (E1) -- node[below] {\tiny $2$} (V21);
						\draw[-] (E1) -- node[above] {\tiny $3$} (V10);
						\draw[-] (E1) -- node[left] {\tiny $4$} (V20);
						\draw[thick,-latex] (V01) -- node[above] {$k$} (V11);
						\draw[thick,-latex] (V00) -- node[below] {$k$} (V10);
		}}}}
		$, \;
		$
		T_2^\bullet + Q_1^\bullet \to 
		\vcenter{\hbox{{\tikz[baseline=.1ex]{
						\def\HOR{1}
						\def\VER{1}
						\def\BEND{30}
						\node[hyperedge] (E1) at ($({\VER*2},\HOR*0.5)$) {$Q_1$};
						\foreach \i in {0,1}
						{
							\foreach \j in {0,1}{
								\pgfmathtruncatemacro\NUM{3-(2*\j)+\i}
								\node[node, label=\ifnumequal{\i}{0}{left}{right}:{\tiny \ifnumequal{\i}{0}{$(\NUM)$}{\ifnumequal{\i}{1}{$(\NUM)$}{}}}] (V\i\j) at ($(\VER*\i,\HOR*\j)$) {};
							}
						}
						\draw[thick,-latex] (V01) -- node[above] {$k$} (V11);
						\draw[thick,-latex] (V00) -- node[below] {$k$} (V10);
		}}}}
		$ for $k=0,1$;
		\item $S^\bullet+Q_1^\bullet \to U^\bullet + Q_2^\bullet$;
		\item 
		$
		\vcenter{\hbox{{\tikz[baseline=.1ex]{
						\def\HOR{1}
						\def\VER{1}
						\def\BEND{30}
						\node[hyperedge] (E1) at ($({\VER*2},\HOR*0.5)$) {$Q_2$};
						\foreach \i in {0,1}
						{
							\foreach \j in {0,1}{
								\pgfmathtruncatemacro\NUM{3-(2*\j)+\i}
								\node[node, label=\ifnumequal{\i}{0}{left}{right}:{\tiny \ifnumequal{\i}{0}{$(\NUM)$}{\ifnumequal{\i}{1}{$(\NUM)$}{}}}] (V\i\j) at ($(\VER*\i,\HOR*\j)$) {};
							}
						}
						\draw[thick,-latex] (V01) -- node[above] {$P$} (V11);
						\draw[thick,-latex] (V00) -- node[below] {$U$} (V10);
		}}}}
		\;\to
		\vcenter{\hbox{{\tikz[baseline=.1ex]{
						\def\HOR{1}
						\def\VER{1}
						\def\BEND{30}
						\node[hyperedge] (E1) at ($({\VER*3},\HOR*0.5)$) {$Q_2$};
						\foreach \i in {0,1}
						{
							\foreach \j in {0,1}{
								\pgfmathtruncatemacro\NUM{3-(2*\j)+\i}
								\node[node, label=\ifnumequal{\i}{0}{left}{right}:{\tiny \ifnumequal{\i}{0}{$(\NUM)$}{\ifnumequal{\NUM}{2}{$(\NUM)$}{}}}] (V\i\j) at ($(\VER*\i,\HOR*\j)$) {};
							}
						}
						\node[node, label=right:{\tiny $(4)$}] (V20) at ($(\VER*2,\HOR*0)$) {};
						\draw[thick,-latex] (V00) -- node[left] {$b$} (V01);
						\draw[thick,latex-] (V10) -- node[right] {$b$} (V11);
						\draw[thick,-latex] (V10) -- node[below] {$U$} (V20);
		}}}};
		$
		\item $U^\bullet + Q_2^\bullet \to c^\bullet$.
	\end{enumerate}
	
	This ht-system is linear-time because each production, except for 5, which is applied at most once in any derivation, increases the number of terminal symbols. This ht-system generates hypergraphs of the form
	\begin{equation}\label{equation:NP-complete}
		\SG(bw_1b bw_2b \ldots bw_m b c a x_1 a y_1 a z_1 \ldots a x_n a y_n a z_n)
	\end{equation}
	such that $w_i,x_i,y_i,z_i$ are nonempty strings over the alphabet $\{0,1\}$ and such that $\{w_1,\ldots,w_m\}$ coincides as a multiset with $\{x_{i_1},y_{i_1},z_{i_1},\ldots,x_{i_l},y_{i_l},z_{i_l}\}$ for some $1 \le i_1 < \ldots < i_l \le n$. Consequently, one can reduce the exact cover problem by 3-sets to this language: if $X=\{w_1,\ldots,w_{m}\}$ is a set and $C = \{\{x_1,y_1,z_1\},\ldots,\{x_n,y_n,z_n\}\}$ is a collection of 3-element subsets of $X$, then checking whether $X$ is the disjoint union of some sets from $C$ is equivalent to checking whether the hypergraph (\ref{equation:NP-complete}) is generated by the ht-system.

\subsection{Proof of Theorem \ref{theorem:completeness}}\label{appendix:proof-theorem:completeness}

\begin{lemma}\label{lemma:properties-HXYGA}
	\leavevmode
	\begin{enumerate}
		\item Hypergraphs $H_1 = H^{X;Y_1}_{\Gamma;A_1}$ and $H_2 = H^{X;Y_2}_{\Gamma;A_2}$ are equal (as concrete hypergraphs) if and only if $\type(H_1) = \type(H_2)$.
		\item If $v$ is an isolated node in $H = H^{X;Y}_{\Gamma;A}$, then $H^\prime = H^{X\setminus\{v\};Y\setminus\{v\}}_{\Gamma;A}$ is obtained from $H$ by removing $v$.
	\end{enumerate}
\end{lemma}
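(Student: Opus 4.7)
The plan is to observe that in the definition of $H^{X;Y}_{\Gamma;A}$, the components $V$, $E$, $\att$, and $\lab$ depend only on $\Gamma$ and $X$, and \emph{not} on $A$ or $Y$, which influence solely the external-node function $\ext$. I will then analyse how $\ext$ changes with $A$ and $Y$.

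For part~1, the ``only if'' direction is immediate since the type of a hypergraph is by definition the domain of its $\ext$ function, and equal hypergraphs have equal $\ext$. For ``if'', fix $H_1$ and $H_2$ as in the statement and note that $V_{H_1}=V_{H_2}=\FVar(\Gamma)\cup X$, $E_{H_1}=E_{H_2}$, $\att_{H_1}=\att_{H_2}$ and $\lab_{H_1}=\lab_{H_2}$. It remains to show $\ext_{H_1}=\ext_{H_2}$. Both $\ext_{H_1}$ and $\ext_{H_2}$ are identity functions on their respective domains (namely $(V_{H_i}\cap\FVar(A_i))\cup Y_i$), and those domains equal $\type(H_1)$ and $\type(H_2)$ respectively. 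The hypothesis $\type(H_1)=\type(H_2)$ thus gives $\ext_{H_1}=\ext_{H_2}$, completing the equality.

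For part~2, the key observation is that $v$ being isolated in $H=H^{X;Y}_{\Gamma;A}$ forces $v\notin\FVar(\Gamma)$. Indeed, using the definition of $\att$, I have $\ran(\att_H(e_i))=h_i(\FVar(A_i^{\circ}))=\FVar(A_i)$ for each $i$, so $\bigcup_{e\in E_H}\ran(\att_H(e))=\FVar(\Gamma)$. Hence if $v$ is isolated then $v\notin\FVar(\Gamma)$, and consequently $v\in X$ (since $v\in V_H=\FVar(\Gamma)\cup X$). Now let $H'=H^{X\setminus\{v\};Y\setminus\{v\}}_{\Gamma;A}$. Then $V_{H'}=\FVar(\Gamma)\cup(X\setminus\{v\})=V_H\setminus\{v\}$, while $E_{H'}=E_H$, $\lab_{H'}=\lab_H$, and $\att_{H'}=\att_H$ (values of $\att_H$ avoid $v$ by isolation, so the codomain restriction to $V_H\setminus\{v\}$ is valid). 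Finally, the domain of $\ext_{H'}$ is $(V_{H'}\cap\FVar(A))\cup(Y\setminus\{v\})=((V_H\cap\FVar(A))\cup Y)\setminus\{v\}=\dom(\ext_H)\setminus\{v\}$, and on this domain $\ext_{H'}$ is the identity, agreeing with $\ext_H$ restricted to this set. Thus $H'$ coincides with $H$ minus the node $v$.

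Neither part requires sophisticated reasoning; the only mild subtlety is checking that all $\ext$-related bookkeeping aligns in part~2, and in particular that the identification $\FVar(\Gamma)=\bigcup_i\ran(\att_H(e_i))$ underlies the equivalence between ``$v$ is isolated'' and ``$v\notin\FVar(\Gamma)$''. This is the only step where one has to unfold the definition of $A_i^{\circ}$ and $h_i$; everything else is a direct set-theoretic verification.
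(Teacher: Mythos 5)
Your proof is correct and follows essentially the same route as the paper's (much terser) argument: only $\ext$ depends on $A$ and $Y$, it is the identity on its domain which equals the type, and ``$v$ isolated'' is equivalent to $v\notin\FVar(\Gamma)$. Your version just spells out the bookkeeping (in particular the identity $\bigcup_i\ran(\att_H(e_i))=\FVar(\Gamma)$) that the paper leaves implicit.
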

\begin{proof}
	\begin{enumerate}
		\item This follows from the definition of $H = H^{X;Y}_{\Gamma;A}$, because $Y,A$ define only the domain of $\ext_H$.
		\item The node $v$ being isolated is equivalent to the fact that $v \notin \FVar(\Gamma)$. Therefore, $V_{H^\prime} = \FVar(\Gamma) \cup (X\setminus\{v\}) =  (\FVar(\Gamma) \cup X) \setminus \{v\} = V_H \setminus\{v\}$. Clearly, only the node $v$ is affected, so the statement follows.
	\end{enumerate}
	
\end{proof}

We start with showing that $\sub_h(\mathrm{u}(A)) \subseteq \mathrm{u}(A[h])$. Clearly, it suffices to check this property only for the case where $h$ changes only one variable, e.g. $h(z) = t$ ($z \ne t$) and $h(x)=x$ for $x \ne z$. Indeed, $\sub_h(\sub_g(H)) = \sub_{h \circ g}(H)$ and $\sub_h$ is a monotone operation.

Let $H  = H^{X;Y}_{\Gamma; A}$ for $\Gamma \vdash A$ being derivable and let $h(z) = t$ ($z \ne t$) and $h(x)=x$ for $x \ne z$. Our goal is to show that a hypergraph isomorphic to $\sub_h(H)$ belongs to $\mathrm{u}(A[h])$.

\textit{Case 1.} $z \notin \type(H)$. Then $\sub_h(H)=H$. If $z \notin \FVar(A)$, then $A[t/z]=A$, so $H \in \mathrm{u}(A[t/z])$, and we are done. 

Assume that $z \in \FVar(A)$; then, $z \notin (\FVar(\Gamma) \cup X)$. Consequently, $\Gamma \vdash A[t/z]$ is derivable because $z$ does not occur freely in $\Gamma$.
Let us define $H^\prime \eqdef H^{X;Y}_{\Gamma; A[t/z]} \in \mathrm{u}(A[t/z])$. $H^\prime$ and $H$ can possibly differ only in $\type(H)$ and $\type(H^\prime)$; if $\type(H)=\type(H^\prime)$, then $H=H^\prime$ (Lemma \ref{lemma:properties-HXYGA}), and the proof is completed. Since $z \notin \type(H)$, $z \notin \type(H^\prime)$. Besides, if $t \in \type(H)$, then $t \in \type(H^\prime)$. 

Assume that $t \in \type(H^\prime) \setminus \type(H)$ (this is the only remaining possibility for $\type(H)$ and $\type(H^\prime)$ to differ). This implies that $t \notin Y$ and that $t \notin (\FVar(\Gamma) \cup X) \cap \FVar(A)$. Let $\tau$ be a fresh variable not occuring in $\Gamma,A,X,Y$. Then, $\hat{H} \eqdef H^{X[\tau/t];Y}_{\Gamma[\tau/t];A}$ is isomorphic to $H$ and $\Gamma[\tau/t] \vdash A$ is derivable. Indeed, either $t$ is not in $\FVar(\Gamma) \cup X$, and hence substitution changes nothing; or $t$ is not free in $A$, hence $\hat{H}$ is obtained from $H$ by just renaming the node $t$ as $\tau$. External nodes are not changed because $t$ is not external in $H$ and the new node $\tau$ is not external in $\hat{H}$ because it does not belong to $\FVar(A)$ or to $Y$. 

The sequent $\Gamma[\tau/t] \vdash A[t/z]$ is derivable because $z$ does not occur freely in $\Gamma[\tau/t]$. Consider $\hat{H}^\prime = H^{X[\tau/t];Y}_{\Gamma[\tau/t];A[t/z]}$. It holds that $t \notin \type(\hat{H}^\prime)$ as well as $t \notin \type(\hat{H})$. Thus, $\type(\hat{H}) = \type(\hat{H}^\prime)$, so $\hat{H}=\hat{H}^\prime \in \mathrm{u}(A[t/z])$. Therefore, $H$ belongs to $\mathrm{u}(A[t/z])$, as we consider membership to a hypergraph language up to isomorphism.

\textit{Case 2.} $z \in \type(H)$, $t \notin \type(H)$. Let $\tau$ be a fresh variable; define a function $g$ such that $g(z)=t$, $g(t)=\tau$, otherwise identical. The hypergraph $H$ is isomorphic to $H_1 \eqdef H^{X[\tau/t];Y[\tau/t]}_{\Gamma[\tau/t];A[\tau/t]}$. Indeed, $H_1$ is obtained from $H$ by simply renaming $t$ by $\tau$ and, since $t$ is not external, external nodes of $H$ remain unchanged. Now, note that $\sub_h(H_1)$ is isomorphic to $H_2 \eqdef H^{X[g];Y[g]}_{\Gamma[g];A[g]}$. Indeed, $\sub_h(H_1)$ is a renaming of the external node $z$ by $t$; in turn, $H_2$ is obtained from $H_1$ by changing the node $z$ to $t$ (with $\ext_{H_2}(t)=t$).

Finally, $H_2$ equals $H_3 \eqdef H^{X[g];Y[g]}_{\Gamma[g];A[t/z]}$. This can be verified by Lemma \ref{lemma:properties-HXYGA}. Indeed, both $\type(H_2)$ and $\type(H_3)$ do not contain $z$ because $z$ does not belong even to $V_{H_2}$ and to $V_{H_3}$. Besides, $t \in \type(H_2)$ and $t \in \type(H_3)$ because $z \in \type(H_1)$; finally, $\tau \notin \type(H_2)$. Thus, $\type(H_2) = \type(H_3)$, hence $H_2=H_3 \in \mathrm{u}(A[t/z])$. We have proved that a hypergraph isomorphic to $\sub_h(H)$ belongs to $\mathrm{u}(A[t/z])$, as desired.

\textit{Case 3.} $z,t \in \type(H)$. In this case, take $H^\prime \eqdef H^{X[h];Y[h]}_{\Gamma[h];A[h]}$. $H^\prime$ is obtained from $H$ by identifying the node $z$ with the node $t$, the resulting node being named $t$. Thus, clearly, $H^\prime$ is isomorphic to $\sub_h(H)$.

Let us check that $\mathrm{u}(\forall x A) = \bigcap\limits_{y \in \Var} \mathrm{u}(A[y/x])$. Take $H = H^{X;Y}_{\Gamma;\forall x A} \in \mathrm{u}(\forall x A)$. 
\begin{itemize}
	\item If $y \notin V_H = \FVar(\Gamma) \cup X$, then $H^{X;Y}_{\Gamma; \forall x A} = H^{X;Y}_{\Gamma;A[y/x]}$, and, since $\Gamma \vdash A[y/x]$ is derivable, it holds that $H \in \mathrm{u}(A[y/x])$. 
	\item If $y \in \ran(\ext_H) = (V_H \cap \FVar(\forall x A)) \cup Y$, then it is also the case that $H^{X;Y}_{\Gamma; \forall x A} = H^{X;Y}_{\Gamma;A[y/x]}$ and we are done.
	\item Let $y \in V_H \setminus (\FVar(\forall x A) \cup Y)$. Let $z$ be a fresh variable and let $\Gamma^\prime$ and $X^\prime$ be obtained by replacing $y$ by $z$ in $\Gamma$ and $X$ resp. Then, $H = H^{X;Y}_{\Gamma;\forall x A}$ is isomorphic to $H_{\Gamma^\prime;\forall x A}^{X^\prime;Y}$ and $\Gamma^\prime \vdash \forall x A$ is derivable (since $z$ is not free in $\forall x A$); in turn, $H_{\Gamma^\prime;\forall x A}^{X^\prime;Y} = H_{\Gamma^\prime;A[y/x]}^{X^\prime;Y}$ because $y \notin \FVar(\Gamma^\prime) \cup X^\prime$. This implies that $H$ (as an abstract hypergraph) belongs to $\mathrm{u}(A[y/x])$, as desired. Note that $\Gamma^\prime \vdash A[y/x]$ is derivable because of invertibility of $(\forall R)$. 
\end{itemize}
Thus, $\mathrm{u}(\forall x A) \subseteq \bigcap\limits_{y \in \Var} \mathrm{u}(A[y/x])$. Let us show the converse inclusion. Let $H \in \bigcap\limits_{y \in \Var} \mathrm{u}(A[y/x])$ and let $z$ be a fresh variable not in $\type(H)$. Then, for some $X,Y,\Gamma$, it holds that $H = H^{X;Y}_{\Gamma;A[z/x]}$ and that $\Gamma \vdash A[z/x]$ is derivable. Note that $\type(H) = ((\FVar(\Gamma) \cup X) \cap \FVar(A[z/x])) \cup Y$; therefore, $z \notin \FVar(\Gamma) \cup X$ and, by $(\forall R)$, the sequent $\Gamma \vdash \forall x A$ is derivable. It remains to observe that $H^{X;Y}_{\Gamma;A[z/x]} = H^{X;Y}_{\Gamma;\forall x A}$.

Let us check that $\mathrm{u}(A \limpl B) \subseteq \{H \mid \forall H^\prime \in \mathrm{u}(A) \, (H \pc H^\prime \in \mathrm{u}(B))\}$. Take hypergraphs $H_1 = H_{\Gamma;A \limpl B}^{X;Y} \in \mathrm{u}(A \limpl B)$ and $H_2 = H_{\Delta;A}^{Z;W} \in \mathrm{u}(A)$. Without loss of generality, let us assume that variables in $(\FVar(\Gamma) \cup X) \setminus \type(H_1)$ and $(\FVar(\Delta) \cup Z) \setminus \type(H_2)$ are replaced by fresh ones (such a replacement results in isomorphic hypergraphs). Nodes in $H_1$ are variables from $\FVar(\Gamma) \cup X$ and nodes in $H_2$ are variables from $\FVar(\Delta) \cup Z$.

Let $H_3 \eqdef H_{\Gamma , \Delta; B}^{X \cup Z ; Y \cup W \cup U}$ where $U = (\FVar(\Gamma, \Delta) \cup X \cup Z) \cap \FVar(A)$. We claim that $H_1 \pc H_2 = H_3$. First, note that the above variable freshness condition guarantees that only common external nodes of $H_1$ and $H_2$ are identified in $H_3$. Let us also check that external nodes in $H_1\pc H_2$ and in $H_3$ are the same, i.e. that $\type(H_1 \pc H_2) = \type(H_3)$. 
\begin{align*}
	\type(H_1 \pc H_2) &= \type(H_1) \cup \type(H_2) 
	\\
	& = ((\FVar(\Gamma) \cup X) \cap \FVar(A \limpl B)) \cup Y \cup
	\\ & \hspace{3cm} ((\FVar(\Delta) \cup Z) \cap \FVar(A)) \cup W
	\\
	& \subseteq
	((\FVar(\Gamma , \Delta) \cup X \cup Z) \cap \FVar(B)) \cup Y \cup W \cup U = \type(H_3).
\end{align*}
To check that $\type(H_1 \pc H_2) \supseteq \type(H_3)$, assume that $x \in \type(H_3) \setminus (\type(H_1) \cup \type(H_2))$. This implies that $x$ belongs to $(\FVar(\Delta) \cup Z) \cap \FVar(B)$. However, this contradicts the fact that $x$ is fresh for $B$. Thus, $\type(H_3) = \type(H_1) \pc \type(H_2)$.

To show that $\mathrm{u}(A \limpl B) \supseteq \{H \mid \forall H^\prime \in \mathrm{u}(A) \, (H \pc H^\prime \in \mathrm{u}(B))\}$, assume that we are given a hypergraph $H$ such that $H \pc H^\prime$ belongs to $\mathrm{u}(B)$ for each $H^\prime \in \mathrm{u}(A)$. Take $H^\prime \eqdef H^{\type(H);\type(H)}_{A;A} \in \mathrm{u}(A)$. Then $H \pc H^\prime = H^{X;Y}_{\Gamma;B}$ for some $X,Y,\Gamma$. Clearly,
\begin{itemize}
	\item $\Gamma = A, \Gamma^\prime$;
	\item $\type(H \pc H^\prime) = \type(H) \cup \FVar(A)$;
	\item $H$ is obtained from $H^{X;Y}_{\Gamma;B}$ by removing one $A$-labeled hyperedge and then external nodes from $\FVar(A) \setminus \type(H)$. To be more precise, if $x\in \FVar(A) \setminus \type(H)$, then $x \in \type(H \pc H^\prime)$ but $x \notin \type(H)$; thus, after removing the $A$-labeled hyperedge from $H^{X;Y}_{\Gamma;B}$, the node $x$ becomes isolated. Thus, $x \notin \FVar(\Gamma^\prime)$ because otherwise $x$ would be an attachment node of a hyperedge corresponding to a formula from $\Gamma^\prime$.
\end{itemize}
First, consider the hypergraph $H_1 = H^{X\cup\FVar(A);Y}_{\Gamma^\prime;B}$. It is obtained from $H_0 = H^{X;Y}_{\Gamma;B}$ by removing an $A$-labeled hyperedge and changing nothing else. Nodes from $\FVar(A) \setminus \type(H)$ are isolated in $H_1$, and $\type(H_1) = \type(H_0)$. Secondly, we claim that $H_1$ is equal to $H_2 = H^{X \cup \FVar(A);Y}_{\Gamma^\prime;A \limpl B}$. Indeed, $\type(H_2) \setminus \type(H_1) \subseteq (\FVar(\Gamma^\prime) \cup X \cup \FVar(A)) \cap \FVar(A) = \FVar(A)$, but $\type(H_1) = \type(H_0)\supseteq \FVar(A)$, so $\type(H_2) \setminus \type(H_1) = \emptyset$ and thus $\type(H_1) = \type(H_2)$. Applying Lemma \ref{lemma:properties-HXYGA} shows that $H_1=H_2$.

Finally, the second statement of Lemma \ref{lemma:properties-HXYGA} implies that there are $X^\prime,Y^\prime$ such that $H^{X^\prime;Y^\prime}_{\Gamma^\prime;A \limpl B}$ is obtained from $H_2$ by removing nodes that belong to $\FVar(A) \setminus \type(H)$. Therefore, $H^{X^\prime;Y^\prime}_{\Gamma^\prime;A \limpl B}$ is isomorphic to $H$ and thus $H \in \mathrm{u}(A \limpl B)$. 

Summing up, we have proved that $\langle \mathrm{T},\mathrm{u} \rangle$ is a hypergraph language model.


\end{document}